\def\d{\delta}
\newtheorem{thm}{Theorem}
\newtheorem{prop}[thm]{Proposition}
\newtheorem{remark}{Remark}
\newtheorem{assump}{}
\newtheorem{definition}{Definition}
\newcommand{\mathscr}{\mathcal}
\newcommand{\ito}{It\=o}
\newcommand{\college}{\textit{Coll\`ege de France}}
\newcommand{\frechet}{Fr\'echet }
\newcommand{\rref}[1]{(\ref{#1})}
\newcommand{\Ind}{\mathds{1}}
\newcommand{\bi}{\begin{itemize}}
\newcommand{\ei}{\end{itemize}}
\newcommand{\be}{\begin{equation}}
\newcommand{\ee}{\end{equation}}
\renewcommand{\l}{\left}
\renewcommand{\r}{\right}
\newcommand{\la}{\l\langle}
\newcommand{\ra}{\r\rangle}
\newcommand{\R}{\mathbb{R}}
\newcommand{\J}{\mathcal{J}}
\newcommand{\Je}{\mathcal{J}^\eps}
\newcommand{\U}{\mathcal{U}}
\renewcommand{\P}{\mathscr{P}}
\newcommand{\Mt}[1]{\mathscr{L}^2_{#1}(\Ptwo)}
\newcommand{\MM}[2]{\mathscr{M}([#1,#2],\R)}
\renewcommand{\H}{\mathscr{H}}
\newcommand{\W}{\mathscr{W}}
\renewcommand{\L}{\mathscr{L}}
\newcommand{\Ptwo}{\mathscr{P}_{2}(\mathbb{R})}
\newcommand{\Htwo}{\H^{2}([0,T];\mathbb{R})}
\newcommand{\HtwotT}{\H^{2}([\tau,T];\mathbb{R})}
\newcommand{\Ltwo}{\mathscr{L}^{2}}
\newcommand{\tF}{\tilde{\F}}
\newcommand{\tP}{\tilde{\mathbb{P}}}
\newcommand{\F}{\mathscr{F}}
\newcommand{\G}{\mathscr{G}}
\newcommand{\Law}{\mathcal{L}}
\newcommand{\Laww}[1]{\mathcal{L}(#1)}
\newcommand{\CP}{\mc{M}([0,T];\Ptwo)}
\renewcommand{\t}{\tilde}
\newcommand{\h}{\hat}
\newcommand{\ld}{\lambda}
\newcommand{\sg}{\sigma}
\newcommand{\tw}{\t{\omega}}
\newcommand{\bal}{\bar{\alpha}}
\newcommand{\ha}{\hat{\alpha}}
\newcommand{\hal}{\hat{\alpha}}
\newcommand{\vp}{\varphi}
\newcommand{\mc}{\mathcal}
\newcommand{\mb}{\mathbb}
\newcommand{\mbb}{\mathbb}
\newcommand{\eps}{\varepsilon}
\newcommand{\tdW}{d\tilde{W}}
\newcommand{\tW}{\tilde{W}}
\newcommand{\tZ}{\tilde{Z}}
\newcommand{\tC}{\tilde{C}}
\newcommand{\tQ}{\tilde{Q}}
\newcommand{\tE}{\tilde{\mb{E}}}
\newcommand{\hE}{\hat{\mb{E}}^0}
\newcommand{\hU}{\hat{U}}
\newcommand{\hV}{\hat{V}}
\newcommand{\hX}{\hat{X}}
\newcommand{\hY}{\hat{Y}}
\newcommand{\hw}{\hat{\omega}^0}
\newcommand{\Ve}{\mathcal{V}^{\eps}}
\newcommand{\Ue}{\mathcal{U}^{\eps}}
\newcommand{\Uo}{\mathcal{U}^{0}}
\newcommand{\ue}{u^{\eps}}
\newcommand{\me}{m^{\eps}}
\newcommand{\ve}{v^{\eps}}
\newcommand{\hae}{\hat{\alpha}^\eps}
\newcommand{\Vo}{\mathcal{V}^{0}}
\newcommand{\uo}{u^{0}}
\newcommand{\mo}{m^{0}}
\newcommand{\du}{ \delta^{u,\eps}}
\newcommand{\dm}{ \delta^{m,\eps}}
\newcommand{\dv}{ \delta^{v}}
\newcommand{\duo}{ \delta^{u}}
\newcommand{\dmo}{ \delta^{m}}
\newcommand{\bU}{\bar{U}}
\newcommand{\bV}{\bar{V}}
\newcommand{\bQ}{\bar{Q}}
\newcommand{\tbQ}{\t{\bar{Q}}}
\newcommand{\hmbE}{\hat{\mathbb{E}}^0}
\newcommand{\Xe}{X^{\eps}}
\newcommand{\Ye}{Y^{\eps}}
\newcommand{\Ze}{Z^{\eps}}
\newcommand{\tZe}{\tilde{Z}^{\eps}}
\newcommand{\Xo}{X^{0}}
\newcommand{\Yo}{Y^{0}}
\newcommand{\dXe}{\Delta X^{\eps}}
\newcommand{\dYe}{\Delta Y^{\eps}}
\newcommand{\dZe}{\Delta Z^{\eps}}
\newcommand{\dtZe}{\Delta \tilde{Z}^{\eps}}
\renewcommand{\d}[1]{\partial_{#1}}
\newcommand{\dd}[1]{\partial^2_{#1}}
\title{Asymptotic Analysis of Mean Field Games with Small Common Noise}
\author{Saran Ahuja, Weiluo Ren, Tzu-Wei Yang} 
\thanks{Department of Mathematics, Stanford University, Sloan Hall, Stanford, CA 94305 ({\tt ssunny@stanford.edu})}
\thanks{Department of Mathematics, Stanford University, Sloan Hall, Stanford, CA 94305 ({\tt weiluo@stanford.edu})}
\thanks{School of Mathematics, University of Minnesota, Minneapolis, MN 55455
({\tt yangx953@umn.edu})}
\begin{document}

\begin{abstract}
In this paper, we consider a mean field game (MFG) model perturbed by small common noise. Our goal is to give an approximation of the Nash equilibrium strategy of this game using a solution from the original no common noise MFG whose solution can be obtained through a coupled system of partial differential equations. We characterize the first order approximation via linear mean-field forward-backward stochastic differential equations whose solution is a centered Gaussian process with respect to the common noise. The first order approximate strategy can be described as follows: at time $t \in [0,T]$, applying the original MFG optimal strategy for a sub game over $[t,T]$ with the initial being the current state and distribution. We then show that this strategy gives an approximate Nash equilibrium of order $\epsilon^2$. 
\end{abstract}

\maketitle

%

\pagestyle{myheadings}
\thispagestyle{plain}

\section{Introduction}
Mean field game (MFG) is a limit model for a stochastic differential game with large number of players,  symmetric cost functions, and interactions of mean-field type. Specifically, each player optimizes a control problem whose state process and cost functions depend not only on their own state and control but also on other players' decision through the empirical distribution of their states. Under certain independence assumption, considering the control problem at the asymptotic regime can reduce this high-dimensional complex interacting system to a fully-coupled forward-backward partial differential equations (PDEs). The solution of this system can then be used to approximate the Nash equilibrium solution of the finite player games. This novel idea was first proposed by Lasry and Lions \cite{lasry2006, lasry2006_2,lasry2007} and independently from an engineering community by Caines, Huang, and Malham\'e \cite{huang2007}. 

The MFG problem, with linear-convex setting as will be considered in this paper, is defined as follows;
\begin{equation}\label{mfg_nc}\begin{cases}
    \alpha^* \in \arg\min_{\alpha \in \mc{A}} \mb{E} \l[ \int_0^T \frac{\alpha^2}{2}dt + g(X^\alpha_T,m_T)\r]\\
    dX^\alpha_t = \alpha_t dt + \sigma dW_t  \\
    m_t = \Law(X^{\alpha^*}_t) 
  \end{cases}
\end{equation}

In the past decades, active research has been done on MFG model with tremendous progress in many directions. See \cite{gomes2013survey} for a brief survey and \cite{bensoussan2013} for a more extensive reference. Many extensions of \eqref{mfg_nc} has been considered including a model with major/minor players \cite{bensoussan2014mean,carmona2014majorminor,huang2010large,nguyen2012linear,nourian2013} and a convergence from finite player games to MFG \cite{bardi2014,fischer2014,gomes2012cont,lacker2014general}. An important extension that has gained a lot of attention is the model with common noise, which is a common Brownian motion that occurs in the state process of every players, relaxing the independence assumption assumed in the original model. This type of model comes up frequently and naturally in many applications particularly in finance or economics \cite{moll2013,carmona2013mean,gueant2011,lasry2008application} where each player is subject to some sort of common market factor. MFG with common noise can be formulated as follows; 
\begin{equation}\label{mfg_c}\begin{cases}
    \alpha^* \in \arg\min_{\alpha \in \mc{A}} \mb{E} \l[ \int_0^T \frac{\alpha^2}{2}dt + g(X^\alpha_T,m_T)\r]\\
    dX^\alpha_t = \alpha_t dt + \sigma dW_t + \eps d\tW_t  \\
    m_t = \Law(X^{\alpha^*}_t | \tF_t), \quad \tF_t = \sigma(\tW_s; 0 \leq s \leq t) 
  \end{cases}
\end{equation}


The common noise model \eqref{mfg_c} is more complicated than the original MFG \eqref{mfg_nc}. In \eqref{mfg_nc}, the law $m^{0,\alpha}_t$ is expected to be deterministic, so it suffices to seek the optimal strategy along the path $(m^{0,\alpha}_t)_{0 \leq t \leq T}$. This reduces the problem to a finite-dimensional system of forward-backward PDEs. The common noise model, on the other hands, is more complex as the flow of players distribution is stochastic. This means that we need to specify the optimal action for all possible trajectories of the players' distribution which is infinite-dimensional. One way to solve this model is to add $m_t$ as an argument in the value function. This approach leads to the study of the \textit{master} equation which is an infinite-dimensional Hamilton-Jacobi-Bellman (HJB) equation that encapsulates all the information of the MFG. See \cite{bensoussan2014master,carmona2014master,gomes2013survey} for some discussions on this approach. Alternatively, one could follow the same methodology as done by Lasry and Lions. In that case,  instead of a forward-backward PDE, the presence of common noise gives a forward-backward stochastic partial differential equation (SPDE). Lastly, we can also use the probabilistic approach proposed by Carmona and Delarue \cite{carmona2013probabilistic} which formulates the MFG as a mean-field forward-backward stochastic differential equations (FBSDE). The difference between the two approaches to MFG is from the two different approaches to stochastic control problems, namely the Stochastic Maximum Principle (SMP) and the Dynamic Programming Principle (DPP).

Recently, there has been progress in the study of MFG with common noise concerning mostly to well-posedness results. In \cite{ahuja2014,ahuja2016forward}, the existence and uniqueness result of MFG with common noise is proved when the state process is linear and the cost functions satisfy a certain convexity and monotonicity condition. Carmona et. al. \cite{carmona2016} gives the existence and uniqueness result of a \textit{weak} solution under a more general setting. In \cite{bensoussan2014master,carmona2014master}, the master equation was discussed from the perspective of both HJB and probabilistic approaches. Under special circumstances, the common noise model might be explicitly solvable through a transformation which turns the problem to the original no common noise MFG \cite{carmona2013mean,gueant2011,lacker2014translation}. Despite these results, a general common noise model is difficult and impractical to solve numerically or explicitly as it does not enjoy the dimension reduction property as in the case of MFG without common noise. 


The goal of this paper is to consider a MFG problem when the common noise is \textit{small} as denoted by the parameter $\eps$ in \rref{mfg_c}. We will refer to this game as $\eps$-MFG. In this set up, we seek an approximate solution using only the information from $\eps=0$ problem, i.e. the original MFG with no common noise or $0$-MFG. If we denote by $(\alpha^\eps_t,X^\eps_t,m^\eps_t)_{0 \leq t \leq T}$ a solution to $\eps$-MFG described above, then we are essentially interested in the following $\eps$-expansion 
\begin{equation}\label{expansion}
 \alpha_t^\eps = \alpha_t^0 + \eps \delta^{\alpha}_t + o(\eps), \qquad X^\eps_t = X^0_t + \eps \delta^{X}_t + o(\eps)
 \end{equation}
Equivalently, we would like to study the limit as $\eps \to 0$ of
\begin{equation}\label{limits} \frac{X^\eps_t - X^0_t}{\eps}, \quad \frac{\alpha_t^\eps-\alpha_t^0}{\eps}
\end{equation}

%

This paper contributes mainly to the study of the limit \rref{limits} and the corresponding first order approximate strategy. Our main result is to prove that \rref{limits} converges to a solution of a system of linear mean-field FBSDE whose solution is a centered Gaussian process. While recently there has been much work on the MFG model with common noise or the convergence of $N$-player game to MFG, the asymptotic analysis of small common noise model is new to the best of our knowledges. Our setting and assumptions are similar to those in \cite{ahuja2014} where we assume a linear state process, and convex and weak monotone cost functions, with some additional regularity assumptions. 

In addition to the convergence result, we show that the first order approximate strategy (see \rref{expansion}) gives an approximate Nash equilibrium of order $\eps^2$.  More interestingly, we show that the game-theoretic correction in the optimal strategy due to the existence of (small) common noise is not presented at the first order, and we can simply use the 0-MFG optimal strategy along the trajectory of the stochastic flow $(\me_t)_{0 \leq t \leq T}$. That is, at time $t \in [0,T]$, we solve the sub-game of 0-MFG over $[t,T]$ with the initial being the current distribution $\me_t$. Note that this is different from the 0-MFG solution itself since we use $\me_t$ as the initial at $t$ as opposed to $\mo_t$. 

Our main technical tool is the Stochastic Maximum Principle which turns a MFG problem to a mean-field FBSDE. The linear, convex, monotone assumptions on the MFG leads to a mean-field FBSDE with monotone property. A system of monotone FBSDE is well-studied both in the classical setting \cite{hu1995,peng1999,yong1997continuation} and also recently with mean-field terms\cite{ahuja2014,ahuja2016forward,Bensoussan2015} where probabilistic techniques and standard SDE estimates can be applied. Under this setting, we are able to obtain all the results, the limits and the estimates, in a \textit{strong} sense, namely in $\mathcal{L}^2$, using similar tools.

The paper is organized as follows. In section \ref{mfg_commonnoise}, we consider a general MFG with common noise through the Stochastic Maximum Principle and discuss the well-posedness result as well as existence of the decoupling function all of which will be used in subsequent sections. The main results, namely the asymptotic analysis of $\eps$-MFG, are given in section \ref{asymptotic}. We then discuss a connection between the SMP approach and the DPP approach in section \ref{dpp2}. The Appendices contain the proofs of the main theorems and lemmas as well as discussions on the existence and uniqueness of FBSDE with monotone functionals and the notion of differentiation with respect to a probability measure.

\section{Mean field game with common noise}\label{mfg_commonnoise}

\subsection{Notations and general set up}\label{setup}
Fix a terminal time $T > 0$. Let $(W_t)_{0 \leq t \leq T}, (\tW_t)_{0 \leq t \leq T}$ denote two independent Brownian motions on $\R$ defined on a complete filtered probability space $(\Omega, \F, \mb{F}=\{\F_t\}_{0 \leq t \leq T}, \mb{P})$ augmented by a $\mb{P}$-null set. We call $(W_t)_{0 \leq t \leq T}$ the \textit{individual noise} and $(\tW_t)_{0 \leq t \leq T}$ the \textit{common noise}. We assume that $(\Omega,\F,\mb{P})$ is in the form $ (\Omega^0 \times \t{\Omega},\F^0 \otimes \tF,\mb{P}^0 \otimes \tP)$ where $(\t{\Omega},\tF,\tP)$ is the canonical sample space of the common noise $(\tW_t)_{0 \leq t \leq T}$ and that all other randomness, the individual noise and initial, are supported in  $(\Omega^0,\F^0,\mb{P}^0)$.

Let $\Ptwo$ denote the space of Borel probability measure on $\R$ with finite second moment, i.e. all probability measure $\mu$ such that
	$$ \int_\R x^2 d\mu(x) < \infty $$ 
It is a complete separable metric space equipped with a Wasserstein metric defined as
\begin{equation}\label{wass}
 \W_2(m_1,m_2) = \l( \inf_{\gamma \in \Gamma(m_1,m_2)} \int_{\R^2} |x-y|^2 \gamma(dx,dy) \r)^{\frac{1}{2}}
 \end{equation}
where $\Gamma(m_1,m_2)$ denotes the collection of all probability measures on $\R^2$ with marginals $m_1$ and $m_2$.  While we assume one-dimensional Euclidean space for simplicity, all the results in this paper still hold for $\R^d$. Let $\tF^{s}_t$ denote the filtration generated by $\tW_r-\tW_s, s \leq r \leq t$ and we write $\tF_t = \tF^0_t$. Suppose $\G$ is a sub $\sigma$-algebra of $\F$ and $\mb{G} = \{ \G_t \}_{0 \leq t \leq T}$ is a sub filtration of $\mb{F}$, then let $\Ltwo_{\G}$ denote the set of $\G$-measurable real-valued square integrable random variable, $\Mt{\G}$ denote the set of $\G$-measurable random probability measure $\mu$ on $\R$ with finite second moment, and $\H^2_\mb{G}([0,T];\R)$ denote the set of $\G_t$-progressively-measurable process $\beta = (\beta_{t})_{0 \leq t \leq T}$ such that
$$ \mbb{E}\l[ \int_{0}^{T} \beta^{2}_{t} dt \r] < \infty$$
We define similarly the space $\H_\mb{G}^2([s,t];\R)$ for any $0 \leq s  <  t \leq T$ and we will often omit the subscript and write $\H^2([0,T];\R)$ for $\H^2_\mb{F}([0,T];\R)$. We also let $\CP$ denote the space of continuous $\tF_t$-adapted stochastic flow of probability measure $\mu = (\mu_t)_{0 \leq  t \leq T}$ with value in $\Ptwo$ and define similarly $\MM{s}{t}$.

For a control $\alpha \in \Htwo$, let $(X^{\eps,\alpha}_t)_{0 \leq t \leq T}$ be the corresponding state process
\begin{equation}\label{state} 
X^{\eps,\alpha}_t = \xi_0 + \int_0^t \alpha_s ds + \sigma W_t + \eps \tW_t
\end{equation}
where $\xi_0 \in \Ltwo_{\F_0}$ is an initial state. Let $m^{\eps,\alpha}_t$ denote the law of $X^{\eps,\alpha}_t$ conditional on $\tF_t$, i.e.
$$ m^{\eps,\alpha}_t = \Law(X^{\eps,\alpha}_t | \tF_t) $$
It is easy to check that $m^{\eps,\alpha}=(m^{\eps,\alpha}_t)_{0 \leq t \leq T} \in \CP$ when $\alpha \in \Htwo$. Given any $m \in \CP $, we defined the expected cost corresponding to a control $\alpha$ as 
\begin{equation}\label{cost}
\Je(\alpha | m) \triangleq \mb{E}\l[ \int_0^T \frac{\alpha_t^2}{2}dt + g(X^{\eps,\alpha}_T,m_T)\r]
\end{equation}
where $g : \R \times \Ptwo \to \R $ is a terminal cost. 

\begin{remark} While we assume a quadratic running cost to simplify the notations, the result is expected to hold under a more general running cost satisfying similar assumptions that shall be imposed on the terminal cost function $g$, namely convexity and weak monotonicity. 
\end{remark} 


Now we fix $m \in \CP$ and consider a stochastic control problem with the state process \rref{state} and the cost function $\Je(\alpha | m)$. We will refer to this problem as \textit{an individual control problem corresponding to $m$}. In the context of a stochastic differential game,  $m_t$ here represents the distribution of all the players in the game at time $t$. We would like to consider how each individual optimally chooses his/her control given such information. The MFG solution then represents a Nash equilibrium where every player is optimal given other players' decision. 

%

The mean field game problem is defined as follows; Find a control $\ha \in \Htwo$ such that given $m^{\eps,\ha} = (m^{\eps,\ha}_t)_{0 \leq t \leq T}$, the optimal control for the state process (\ref{state}) with cost $\Je(\alpha | m^{\eps,\ha})$ defined by (\ref{cost}) is again $\ha$. In other words, $\ha \in \Htwo$ satisfies
$$ \Je(\ha | m^{\eps,\ha}) \leq \Je(\alpha | m^{\eps,\ha}) ,\quad \forall\alpha \in \Htwo.$$ 
It can be stated succinctly as 
\begin{equation}\label{mfg_c_2}\begin{cases}
    \alpha^* \in \arg\max_{\alpha \in \mc{A}} \mb{E} \l[ \int_0^T \frac{\alpha^2}{2}dt + g(X^\alpha_T,m_T)\r]\\
    dX^\alpha_t = \alpha_t dt + \sigma dW_t + \eps d\tW_t  \\
    m_t = \Law(X^{\alpha^*}_t | \tF_t), \quad \tF_t = \sigma(\tW_s; 0 \leq s \leq t) 
  \end{cases}
\end{equation}
We will often refer to this game as $\eps$-MFG to emphasize the level of the common noise term and call $\ha$ a \textit{solution to $\eps$-MFG problem with initial $\xi_0$}. Observe that the $\eps$-MFG described above can be viewed as a standard control problem with an additional fixed point feature.

\subsection{Existence and uniqueness of MFG with common noise}\label{eu_mfg}  Let us first state the assumptions on the cost function $g$
\begin{assump}(Lipschitz in $x$)\label{lip_x}  For each $x \in \R, m \in \Ptwo$, $\d{x}g(x,m)$ exists and is Lipschitz continuous in $x$. There exists a constant $K$ such that for any $x,x' \in \R$
	$$ |\d{x}g(x,m)-\d{x}g(x',m)|  \leq K |x-x'| $$
	
\end{assump}
\begin{assump}(Convexity) \label{convex} For any $x,x' \in \R, m \in \Ptwo$, 
	\begin{equation}\label{convexity}	
		(\d{x}g(x,m)-\d{x}g(x',m))(x-x') \geq 0
	\end{equation}
\end{assump}

Under these assumptions, we can apply the SMP to the individual control problem for a given $m \in \CP$ resulting in the following system of FBSDE 
\begin{equation}\label{smp_fbsde}
	\begin{gathered}
 dX_{t} = -Y_{t} dt + \sigma dW_{t} + \eps \tdW_{t} \\
 dY_{t}  = Z_{t} dW_{t} + \tZ_{t}\tdW_{t} \\
 X_{0} = \xi, \quad Y_{T} = \d{x}g(X_T,m_T) 
	\end{gathered}
\end{equation}
Solving this FBSDE yields the optimal control $\ha^\eps_t = - Y_t $.
The definition of $\eps$-MFG says that $(m_t)_{0 \leq t \leq T}$ must satisfy the following \textit{consistency} condition.
$$ m_t = m^{\eps,\ha^\eps}_t = \Law(X^{\eps,\ha^\eps}_t | \tF_t) $$
Adding this condition to \eqref{smp_fbsde}, we have the mean-field FBSDE corresponding to MFG with common noise ($\eps$-MFG)
\begin{equation}\label{smp_mfg}
	\begin{gathered}
 dX_{t} = -Y_{t} dt + \sigma dW_{t} + \eps \tdW_{t} \\
 dY_{t}  = Z_{t} dW_{t} + \tZ_{t}\tdW_{t} \\
 X_{0} = \xi, \quad Y_{T} = \d{x}g(X_T,\Law(X_T|\tF_T)) 
	\end{gathered}
\end{equation}
Note that the two system, \eqref{smp_fbsde} and \eqref{smp_mfg}, are different. The FBSDE \eqref{smp_fbsde} is a classical FBSDE with random coefficients from an \textit{exogeneous} $m$. The system \eqref{smp_mfg}, on the other hand, is a mean-field FBSDE where it depends on the law of the solution.


We now discuss the solvability of this FBSDE under what we called a weak monotonicity condition on the cost function $g$. The result below is mainly taken from \cite{ahuja2014}, so we will state the main assumptions and results without proof and refer the reader to \cite{ahuja2014} and reference therein for more detail. We also discuss a slightly more general result, the existence and uniqueness of an FBSDE with monotone functionals, in Appendix \ref{fbsde_monotone_functionals} as we will be using such results in our subsequent analysis. We now state additional assumptions on $g$.

\begin{assump}(Lipschitz in $m$)\label{lip_m} $\d{x}g$ is Lipschitz continuous in $m$ uniformly in $x$, i.e. there exists a constant $K$ such that
	$$ |\d{x}g(x,m)-\d{x}g(x,m')|  \leq K \W_2(m,m') $$
	for all $x \in \R, m,m' \in \Ptwo$, where $\W_2(m,m')$ is the second order Wasserstein metric defined by (\ref{wass}). This is equivalent to the following; for any $x \in \R, \xi,\xi'  \in \Ltwo_\F$
	$$ |\d{x}g(x,\Law(\xi))-\d{x}g(x,\Law(\xi'))|  \leq K (\mb{E}|\xi-\xi'|^2)^{\frac{1}{2}} $$
\end{assump}
\begin{assump}(Weak monotonicity)\label{mon} For any $m,m' \in \Ptwo$ and any $\gamma \in \P_2(\R^2)$ with marginals $m,m'$ respectively,
	$$ \int_{\R^2} \l[ (\d{x}g(x,m)-\d{x}g(y,m'))(x-y) \r] \gamma(dx,dy) \geq 0 $$
	Equivalently, for any $\xi,\xi' \in \Ltwo_\F$, 
	\begin{equation}\label{monotone}
		\mb{E}[ \d{x}g(\xi,\Law(\xi)) - \d{x}g(\xi',\Law(\xi'))(\xi-\xi')] \geq 0
	\end{equation}
\end{assump}



With the assumptions above, the existence and uniqueness of FBSDE \rref{smp_mfg} an the $\eps$-MFG follows. We refer to \cite{ahuja2014,ahuja2016forward} for more detail.

\begin{thm}[Well-posedness of $\eps$-MFG] Under \ref{lip_x}-\ref{mon}, there exist a unique solution $(X_t,Y_t,Z_t,\tZ_t)_{0\leq t \leq T}$ to FBSDE \rref{smp_mfg}. In particular, there exist a unique $\eps$-MFG solution for any initial $\xi_0 \in \Ltwo_{\F_0}$.
\end{thm}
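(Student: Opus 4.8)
The plan is to treat \eqref{smp_mfg} as a mean-field FBSDE with a monotone terminal functional and to prove existence and uniqueness by the method of continuation in the coupling parameter (as in \cite{peng1999,yong1997continuation} for the classical case and \cite{ahuja2014,ahuja2016forward} for the mean-field case), with the weak monotonicity condition \ref{mon} supplying the crucial a priori estimate. Since solving \eqref{smp_mfg} yields the candidate optimal control $\ha^\eps = -Y$ and the consistency condition $m_t = \Law(X_t|\tF_t)$ is already encoded in the terminal data $Y_T = \d{x}g(X_T,\Law(X_T|\tF_T))$, well-posedness of the $\eps$-MFG reduces to well-posedness of \eqref{smp_mfg}, so I would focus entirely on the latter.

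I would establish uniqueness first, through the basic monotonicity estimate. Given two solutions $(X,Y,Z,\tZ)$ and $(X',Y',Z',\tZ')$ sharing the initial $\xi_0$, set $\Delta X = X-X'$ and likewise for the remaining components. Because the diffusion coefficients $\sigma$ and $\eps$ are constant and cancel in the difference, $\Delta X$ has finite variation with $d(\Delta X_t) = -\Delta Y_t\,dt$ while $\Delta Y$ is a pure martingale; applying It\=o's formula to $\Delta X_t\,\Delta Y_t$, taking expectations, and using $\Delta X_0 = 0$ gives
\begin{equation}
\E[\Delta X_T\,\Delta Y_T] = -\,\E\!\int_0^T |\Delta Y_t|^2\,dt .
\end{equation}
The left-hand side equals $\E[\Delta X_T(\d{x}g(X_T,\Law(X_T|\tF_T)) - \d{x}g(X'_T,\Law(X'_T|\tF_T)))]$. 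Conditioning on $\tF_T$, the pair $(X_T,X'_T)$ provides a coupling of the regular conditional laws $\Law(X_T|\tF_T)$ and $\Law(X'_T|\tF_T)$, so \ref{mon} applies fiberwise and yields $\E[\Delta X_T\,\Delta Y_T\mid\tF_T]\ge 0$ a.s.; integrating gives $\E[\Delta X_T\,\Delta Y_T]\ge 0$. Hence $\E\int_0^T|\Delta Y_t|^2\,dt \le 0$, forcing $\Delta Y \equiv 0$, whence $\Delta X \equiv 0$ from the forward dynamics and $\Delta Z = \Delta\tZ \equiv 0$ by martingale representation.

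For existence I would run the continuation method: introduce $\lambda\in[0,1]$ and interpolate between \eqref{smp_mfg} and a reference system for which solvability is elementary (replacing the nonlinear terminal functional by a monotone linear one), inserting inhomogeneous forcing terms so that the whole family retains the monotone structure. The estimate above, combined with the Lipschitz assumptions \ref{lip_x} and \ref{lip_m} and standard $\mathcal{L}^2$ SDE bounds, gives a stability estimate on the solution map that is uniform in $\lambda$; this shows that solvability at any $\lambda_0$ propagates to $[\lambda_0,\lambda_0+\delta_0]$ with a step $\delta_0$ independent of $\lambda_0$, so finitely many steps reach $\lambda=1$. Along the way one must check that $m_t = \Law(X_t|\tF_t)$ is $\tF_t$-adapted and lies in $\CP$, and that \ref{lip_m} and \ref{mon} transfer from unconditional to conditional laws.

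The main obstacle is the existence step, specifically securing the a priori estimate uniformly in $\lambda$ while correctly handling the common-noise conditioning: the terminal functional depends on the \emph{conditional} law $\Law(X_T|\tF_T)$ rather than an unconditional law, so the monotonicity and Lipschitz properties of $\d{x}g$ must be invoked fiberwise over $\t\Omega$ and then integrated, and the measurability and adaptedness of the conditional-measure flow must be preserved throughout the continuation. Since this is precisely the monotone mean-field FBSDE framework of \cite{ahuja2014,ahuja2016forward}, the required estimates are the standard ones there, and the general solvability result for FBSDE with monotone functionals discussed in Appendix \ref{fbsde_monotone_functionals} contains this theorem as a direct instance.
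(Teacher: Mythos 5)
Your proposal is correct and follows essentially the route the paper itself takes: the paper defers this theorem to \cite{ahuja2014,ahuja2016forward} and to Theorem \ref{wellposed} in Appendix \ref{fbsde_monotone_functionals}, which is precisely the monotone-functional FBSDE framework (uniqueness via the It\=o/monotonicity duality estimate, existence via continuation) that you describe, with the terminal functional $G(X)=\d{x}g(X,\Law(X|\tF_T))$. Your attention to applying \ref{mon} and \ref{lip_m} fiberwise in $\t\Omega$ to get the conditional (indicator $\Ind_{A_T}$, $A_T\in\tF_T$) versions required by Theorem \ref{wellposed} is exactly the point that makes the reduction go through.
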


%
%

\subsection{Decoupling function, Markov property, and the master equation}\label{decoupling_markov}

%

A decoupling field of an FBSDE is a possibly random function which describes the relation of the backward process $Y_t$ as a function of the forward process $X_t$. When the coefficients in the FBSDE are deterministic, this function is also deterministic and satisfies a quasilinear PDE. In that case, the FBSDE is said to be Markovian and we call the function a \textit{decoupling function}. A decoupling function is useful not only for solving an FBSDE, the method called \textit{Four-step scheme} \cite{ma1994solving}, but also for understanding the connection between the SMP and HJB approach to stochastic control problems.

For $\eps$-MFG, the existence of a (deterministic) decoupling function is not obvious a priori particularly in the case of common noise since for a fixed $m \in \CP$,  we are in fact dealing with FBSDE with random coefficients. 
However, as the cost function are still a deterministic function of $m$, it is plausible to have such property if we include, as an additional input, the current distribution of players, or in FBSDE context, the conditional distribution of the state process. We state here such result for $\eps$-MFG which is proven in \cite{ahuja2016forward}. We also refer to \cite{delarue2014classical} for more detailed analysis of a decoupling function for 0-MFG.
 
\begin{thm}\label{ue_existence} Under \ref{lip_x}-\ref{mon}, there exist a deterministic function $\Ue: [0,T] \times \R \times \Ptwo \to \R$ such that
 \begin{equation}\label{decouple_smp} 
 	\Ye_t = \Ue(t,\Xe_t,\Law(\Xe_t|\tF_t))
\end{equation}
Moreover, $\Ue$ is uniformly Lipschitz; for all $t \in [0,T], x,x' \in \R, m,m' \in \Ptwo$,
$$| \Ue(t,x,m) - \Ue(t,x',m')| \leq C_{K,T} \l( | x-x'| + \W_2(m,m') \r)$$
where $C_{K,T}$ is a constant depending only on $K,T$.
\end{thm}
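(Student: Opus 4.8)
The plan is to construct $\Ue$ by a \emph{restart} (flow) argument for the mean-field FBSDE \rref{smp_mfg}, applying the well-posedness theorem not only at time $0$ but at an arbitrary intermediate time $t_0$ with an arbitrary initial conditional law. The guiding heuristic is that the pair $(\Xe_t,m_t)$, with $m_t = \Law(\Xe_t \mid \tF_t)$, is Markov for the common-noise filtration, so $\Ye_t$ should depend on the present data $(t,\Xe_t,m_t)$ alone. (One could instead try to build the master field $\U$ with $\Ue = \d{x}\U$ and read off its regularity, but the probabilistic route is more direct here.) The one point needing care is that $\Ue(t_0,\cdot,m_0)$ must be defined for \emph{every} $x_0$, including those outside the support of $m_0$, which forces us to separate the \emph{population} dynamics that generate the law flow from a single \emph{individual} started at $x_0$.

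Concretely, fix $(t_0,x_0,m_0) \in [0,T]\times\R\times\Ptwo$ and work on $[t_0,T]$ with the restarted common noise $(\tW_t - \tW_{t_0})_{t \ge t_0}$, whose filtration is $\tF^{t_0}_t$. First solve \rref{smp_mfg} on $[t_0,T]$ with an initial $\zeta \in \Ltwo_{\F_{t_0}}$ satisfying $\Law(\zeta) = m_0$ (so that $\Law(\zeta \mid \tF^{t_0}_{t_0}) = m_0$ is deterministic); the well-posedness theorem yields a unique solution $(\bXe,\bYe,\bZe,\tbZe)$ and the conditional-law flow $\bme = (\Law(\bXe_t \mid \tF^{t_0}_t))_{t_0 \le t \le T} \in \MP{t_0}{T}$. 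Freezing $\bme$ as an exogenous input, next solve the decoupled individual FBSDE --- the same dynamics with terminal condition $Y_T = \d{x}g(X_T,\bme_T)$ --- from the \emph{deterministic} point $X_{t_0}=x_0$; this is a classical monotone FBSDE with random coefficients and is uniquely solvable. Set $\Ue(t_0,x_0,m_0) := Y_{t_0}$. This value is deterministic: over $[t_0,T]$ both FBSDEs are driven only by the increments of $W$ and $\tW$ after $t_0$ together with the deterministic data $x_0,m_0$, so the tail solution is measurable with respect to those increments; since $Y_{t_0}$ is simultaneously $\F_{t_0}$-measurable and the two $\sigma$-algebras are independent, $Y_{t_0}$ is a.s. constant.

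To see that the genuine $\eps$-MFG solution satisfies $\Ye_t = \Ue(t,\Xe_t,m_t)$, I would invoke the flow property together with uniqueness. Conditioning on $\tF_{t_0}$, the restriction of $(\Xe,\Ye,\Ze,\tZe)$ to $[t_0,T]$ solves \rref{smp_mfg} with initial state $\Xe_{t_0}$ and initial conditional law $m_{t_0}$, while $(\tW_t-\tW_{t_0})_{t\ge t_0}$ is independent of $\tF_{t_0}$ and thus acts as a fresh common noise. Uniqueness of the mean-field FBSDE identifies the law flow of this tail with the $\bme$ built from $m_0 = m_{t_0}$, and uniqueness of the individual FBSDE then forces $\Ye_{t_0}$ to coincide with the value $Y_{t_0}$ of the matching individual sub-problem, namely $\Ue(t_0,\Xe_{t_0},m_{t_0})$. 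Since $t_0$ is arbitrary, the identity holds on all of $[0,T]$.

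The remaining and, I expect, hardest step is the uniform Lipschitz bound. Given data $(x_0,m_0)$ and $(x_0',m_0')$ with population flows $\mu$ and $\mu'$, I would first compare the two population solutions by applying It\^o's formula to the product $\dX\,\dY$ of the differences, integrating over $[t_0,T]$: the weak monotonicity \ref{mon} discards the nonnegative terminal cross term, while convexity \ref{convex} and the Lipschitz bounds \ref{lip_x},\ref{lip_m} control the interior terms, and a Gronwall argument yields $\E\sup_t \W_2(\mu_t,\mu'_t)^2 \le C_{K,T}\,\W_2(m_0,m_0')^2$. Crucially, the $\eps\,\tdW$ terms cancel in the difference dynamics, so $C_{K,T}$ is uniform in $\eps$. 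The same energy estimate applied to the two individual FBSDEs --- frozen flows $\mu,\mu'$ and initial points $x_0,x_0'$ --- gives $|Y_{t_0}-Y_{t_0}'| \le C_{K,T}(|x_0-x_0'| + \sup_t \W_2(\mu_t,\mu'_t))$, and composing the two bounds delivers the claim. The delicate points are extracting the correct sign from \ref{mon} when the terminal data couples state and law, and propagating the $\W_2$-control of the initial measure through the coupled system without losing uniformity in $\eps$.
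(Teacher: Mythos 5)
Your construction is exactly the one the paper uses: it defines $\Ue(s,x,m) \triangleq Y^{s,x,m}_s$ by first generating the equilibrium flow $(m^{s,m}_t)_{s\le t\le T}$ from the restarted mean-field FBSDE with initial law $m$ and then solving the frozen individual FBSDE \rref{mkfbsde_sub} from the deterministic point $x$, with the Lipschitz bound coming from the monotone-FBSDE stability estimate \rref{estimate_diff} (the paper itself defers the details to \cite{ahuja2016forward}). Your determinism argument via independence of the post-$t_0$ increments and the flow-plus-uniqueness identification of $\Ye_{t_0}$ with $\Ue(t_0,\Xe_{t_0},m_{t_0})$ match that reference's proof, so the proposal is correct and takes essentially the same route.
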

As a consequence of the Markov property, the $\eps$-MFG solution is in the feedback form; that is,
 \begin{equation}\label{feedback optimal control} 
 \hae_t = -Y^\eps_t = -\Ue(t,\Xe_t,\Law(\Xe_t|\tF_t)) 
 \end{equation}
 
The decoupling function $\Ue$ in \eqref{decouple_smp} can be defined through a system of FBSDE as follows. For $(s,x,m) \in [0,T] \times \R \times \Ptwo $, we solve the following FBSDE\
\begin{equation}\label{mkfbsde_sub}
	\begin{gathered}
 dX_t = -Y_t dt + \sigma dW_{t} + \eps d\tW_t  \\
 dY_t  =Z_t dW_{t} + \tZ_t\tdW_{t} \\
 X_s = x, \quad Y_T = \d{x}g(X_T,m^{s,m}_T) 
	\end{gathered}
\end{equation}
where $(m^{s,m}_t)_{s \leq t \leq T}$ is the stochastic flow of $\eps$-MFG over $[s,T]$ with initial at $s$ = $m$. Denote the solution of \rref{mkfbsde_sub} by $(X^{s,x,m}_t,Y^{s,x,m}_t,Z^{s,x,m}_t,\tZ^{s,x,m}_t)_{s \leq t \leq T}$, then $Y^{s,x,m}_s$ is deterministic and we define $\Ue$ as 
$$ \Ue(s,x,m) \triangleq Y^{s,x,m}_s $$
We refer to \cite{ahuja2016forward} for more detail. Similar to the classical FBSDE, it is natural to ask if $\Ue$ is a solution to a certain PDE. It turns out that under certain condition, $\Ue:[0,T]\times \R \times \Ptwo \to \R$  can be characterized as a solution to the following \textit{master} equation
\begin{equation}\label{master_ue}
\begin{split}  \d{t}\Ue(t,x,m)-\Ue(t,x,m)\d{x}\Ue(t,x,m)+\frac{\sigma^{2}+\eps^2}{2}\dd{xx}\Ue(t,x,m)- \hE\l[\d{m}\Ue(t,x,m)(\hX)(\d{x}\Ue(t,\hX,m)) \r] \\ \quad + \frac{\sigma^2}{2}\dd{mm}\Ue(t,x,m)(\hX)[\zeta,\zeta] + \frac{\eps^2}{2}\dd{mm}\Ue(t,x,m)(\hX)[1,1]  +\eps^2\hE\l[\dd{xm}\Ue(t,x,m)(\hX)1 \r]= 0
\end{split}
\end{equation}
with terminal condition
$$ \Ue(T,x,m) = \d{x}g(x,m) $$
We refer to Proposition 4.1 in \cite{carmona2014master} for a related result. Here $\hX$ is a lifting random variable, i.e. $\Law(\hX)=m$, and $\zeta$ is a $\mc{N}(0,1)$-random variable independent of $\hX$ both of which are related to the notion of differentiation with respective to $m$ as described in Appendix \ref{derivative}.

This master equation is an infinite-dimensional HJB equation involving the derivative with respect to a probability measure. It was first introduced by Lasry and Lions and was discussed more extensively in \cite{bensoussan2014master,carmona2014master,delarue2014classical}\footnote{In particular, see equation (47) in \cite{carmona2014master}.}.  For the case $\eps=0$, namely MFG without common noise, Lasry and Lions propose a model in  \cite{cardaliaguet2010} through the following forward backward PDE
\begin{equation}\label{uomo}
\begin{aligned}
 	&\d{t}\uo =  \frac{(\d{x}\uo)^{2}}{2}-\frac{\sigma^{2}}{2}\dd{xx}\uo,  &\uo(T,x) = g(x,\mo_T) \\
	&\d{t}\mo = \d{x}(\d{x}\uo\mo) + \frac{\sg^{2}}{2}\dd{xx}\mo,  & \mo(0,x) = m_0(x) = \Law(\xi_0)
\end{aligned}
\end{equation}
where $m^0_t(\cdot) = m^0(t,\cdot)$. The first equation denotes the backward HJB equation for the value function of each players given the flow of distribution $(m^0_t)_{0 \leq t \leq T}$. The second equation is the forward Fokker-Planck equation describing the distribution of players' state given all the players adopt the strategy 
$$ \bal(t,x,m^0_t) = -\d{x}u^0(t,x) $$

Under sufficient regularity assumptions on $u^0$, it can be connected with $\U^0(t,x,m)$ via
\begin{equation}\label{uouox}
 \U^0(t,x,m^0_t) = \d{x}u^0(t,x)
\end{equation}

We would like to emphasize the relation \rref{uouox} as the terms $\U^0(t,x,m^0_t), \d{x}\U^0(t,x,m^0_t), \d{m}\U^0(t,x,m^0_t)$ are the main terms that will appear in our subsequent asymptotic analysis. The relation \rref{uouox} means that the first two terms can be found from the system of PDE \eqref{uomo} describing the $0$-MFG. The last term, which represents the sensitivity of the solution around the optimal path $(m^0_t)_{0 \leq t \leq T}$, is new and will be of crucial importance in the asymptotic analysis.  


\section{Asymptotic analysis}\label{asymptotic}

\subsection{Linear variational FBSDE} In the previous section, we have discussed that, under a linear-convex framework, finding a solution of the $\eps$-MFG is equivalent to solving the corresponding mean-field FBSDE \rref{smp_mfg}, and such system is in fact uniquely solvable under \ref{lip_x}-\ref{mon}. Let denote its solution by $(X^\eps_t, Y^\eps_t, Z^\eps_t, \tZ^\eps_t)_{0 \leq t \leq T}$, i.e. they satisfies
\begin{equation}\label{smp_mfg_2}
	\begin{gathered}
 d\Xe_{t} = -\Ye_{t} dt + \sigma dW_{t} + \eps \tdW_{t} \\
 d\Ye_{t}  = \Ze_{t} dW_{t} + \tZe_{t}\tdW_{t} \\
 \Xe_{0} = \xi, \quad \Ye_{T} = \d{x}g(\Xe_T,\Law(\Xe_T|\tF_T)) 
	\end{gathered}
\end{equation}

Solving this FBSDE yields the $\eps$-MFG solution by setting $\hal^\eps_t = -\Ye_t$. From the discussion in section \ref{decoupling_markov}, we see that solving the $0$-MFG  problem for $(X^0_t,Y^0_t, Z^0_t, \tZ^0_t)_{0 \leq t \leq T}$ requires us to find $\U^0$, which by \rref{uouox} is reduced to solving a system of PDEs. However, when adding common noise, we need to solve for $\Ue$, a solution to the master equation \rref{master_ue}. Instead of solving this infinite-dimensional equation, our goal here is to consider the approximation $(\Xe_t,\Ye_t, \Ze_t, \tZe_t)_{0 \leq t \leq T}$ around $(X^0_t,Y^0_t, Z^0_t, \tZ^0_t)_{0 \leq t \leq T}$ when the common noise is small. Equivalently, we would like to consider the limit as $\eps \to 0$ of
$$ \frac{\Xe_t-\Xo_t}{\eps}, \quad  \frac{\Ye_t-\Yo_t}{\eps}$$
First, we need an additional regularity assumption on $g$;

\begin{assump}\label{second} 
 $\d{x}g$ is differentiable in $(x,m)$ with Lipschitz continuous and bounded derivative. Denote the bound and Lipschitz constant by the same $K$. Specifically for $\dd{mx}g$, they satisfy, for all $x \in \R$, $m,m' \in \Ptwo$, and $\xi,\xi' \in \Ltwo_\F$ with law $m,m'$,
\be\label{lip_second}
\begin{gathered}
\mb{E}[ \dd{mx}g(x,m)(\xi)^2]^{\frac{1}{2}} \leq K \\
\mb{E}[ (\dd{mx}g(x,m)(\xi) - \dd{mx}g(x,m')(\xi'))^2]^{\frac{1}{2}} \leq K  \|\xi-\xi'\|_2
\end{gathered}
\ee
\end{assump}

\begin{remark} $\dd{xm}g$ involves a derivative with respect to a probability measure. We follow the framework introduced by Lasry and Lions in \cite{cardaliaguet2010} which is based on a \frechet derivative of a lifting function defined on a space of random variable. See Appendix \ref{derivative} for more detail.
\end{remark}

Let $\Delta X^\eps_t = \frac{\Xe_t-\Xo_t}{\eps}  $ and denote similarly $\dYe_t,\dZe_t,\dtZe_t$, then they satisfy
 \begin{equation}
 \label{fbsde_delta}
 \begin{gathered}
 	 d\dXe_t = - \dYe_t dt + d\tW_t \\
	 d\dYe_t = \dZe_t dW_t +\dtZe_t d\tW_t \\
 \dXe_0 = 0, \quad \dYe_T= \frac{\d{x}g(X^\eps_T, \Law(X^\eps_T | \tF_T)) -  \d{x}g(X^0_T, \Law(X^0_T | \tF_T))}{\eps}
\end{gathered}
\end{equation}
 Formally taking $\eps \to 0$, we get the following \textit{linear variational FBSDE}
 \begin{equation}
 \label{var}
 \begin{gathered}
 	 dU_t = - V_tdt + d\tW_t \\
	 dV_t =  Q_tdW_t + \tQ_td\tW_t \\
 U_0 = 0, \quad V_T= \dd{xx}g(X^0_T, m^0_T)U_T + \hat{\mb{E}}[ \dd{xm}g(X^{0}_{T},m^0_{T})(\hX^0_T)\hU_{T}]  
\end{gathered}
\end{equation}
 where 
 $$m^0_t = \Law(X^0_t | \tF_t) = \Law(X^0_t) $$
 and $\hX^0$ and $\hU$ are identical copies of $X^0$ and $U$ in the copied space $(\h{\Omega}^0 \times \t{\Omega},\h{\F} \otimes \tF,\h{\mb{P}} \otimes \tP)$ and $\hmbE[\cdot]$ is the expectation with respect to $\hw$ only. The copied space is used simply to distinguish a random variable used to represent a law in the \textit{lifting} functionals from the original random variable (see Appendix \ref{derivative} for more detail).
 We can write this term explicitly as
 \begin{equation}\label{expand_integral}
 \begin{aligned}
  \hat{\mb{E}}[ \dd{xm}g(X^{0}_{T},m^0_{T})(\hX^0_T)\hU_{T}]  
  &= \int_{\hat{\Omega}^0} \dd{xm}g(X^{0}_{T}(\omega^0),m^0_{T})(\hX^0_T(\hw))\hU_{T}(\hw,\tw) d\hat{\mb{P}}(\hw)   \\
  & = \int_{\Omega^0} \dd{xm}g(X^{0}_{T}(\omega^0),m^0_{T})(X^0_T(\hw))U_{T}(\hw,\tw) d\mb{P}^0(\hw)  
   \end{aligned}
   \end{equation}
where we suppress the $\tw$ in $X^0_T,\hX^0_T$ as they do not depend on it. One can see that the term $\hat{\mb{E}}[ \dd{xm}g(X^{0}_{T},m^0_{T})(\hX^0_T)\hU_{T}] $ is a mean field term that couples $\l\{ (\hX^0_T,U_T)(\hw,\cdot); \hw \in \Omega^0 \r\}$ together. Note that each path $ \hw \in \Omega^0$ corresponds a path of an individual player, so in other words, the mean field term gives the sensitivity to the first order change from all other players. Our first result in this section shows that this variation process is well-posed. 


\begin{thm} Assume \ref{lip_x}-\ref{second} hold, there exists a unique adapted solution $(U_t,V_t,Q_t,\tQ_t)_{0 \leq t \leq T}$ to FBSDE \rref{var} satisfying
\begin{equation}\label{estimate_var}
	\mb{E}\l[ \sup_{0 \leq t \leq T} [ U_t^2 + V_t^2 ] + \int_0^T [Q_t^2 + \tQ_t^2] dt \r] \leq C_{K,T}
\end{equation}
where $C_{K,T}$ is a constant depending only on $K,T$.
\end{thm}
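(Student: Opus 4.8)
The plan is to read \eqref{var} as a \emph{linear} mean-field FBSDE whose only coupling is through the terminal functional, and to reduce the statement to an application of the general existence, uniqueness, and a priori estimate theorem for FBSDE with monotone functionals from Appendix \ref{fbsde_monotone_functionals}. Write the terminal condition as $V_T = G(U_T)$, where for a square-integrable $\eta$ (with $\hat\eta$ its copy on the space $(\h{\Omega}^0\times\t{\Omega},\dots)$ used in \eqref{expand_integral}) we set
\[
G(\eta) = \dd{xx}g(X^0_T,m^0_T)\,\eta + \hat{\mb{E}}\!\left[\dd{xm}g(X^0_T,m^0_T)(\hX^0_T)\,\hat{\eta}\right].
\]
Since $G$ is linear with $G(0)=0$, and the remaining data of \eqref{var} (the forward drift $-V$, the constant diffusion coefficients, and the single bounded inhomogeneity $d\tW_t$ in the forward equation) are trivially admissible, it suffices to verify that $G$ is Lipschitz and monotone. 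The Lipschitz bound is immediate from Assumption \ref{second}: the coefficient $\dd{xx}g(X^0_T,m^0_T)$ is bounded by $K$, and the first estimate in \eqref{lip_second} together with Cauchy--Schwarz applied to the representation \eqref{expand_integral} controls the mean-field term, giving $\|G(\eta)-G(\eta')\|_2 \le C_K\|\eta-\eta'\|_2$.

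The monotonicity of $G$ is the heart of the matter and is obtained by differentiating the weak monotonicity Assumption \ref{mon}. Consider $F(\xi)=\d{x}g(\xi,\Law(\xi))$ on $\Ltwo_\F$, which by \eqref{monotone} satisfies $\mb{E}[(F(\xi)-F(\xi'))(\xi-\xi')]\ge 0$. Fix $\delta=\eta-\eta'$, set $\xi_s = X^0_T + s\,\delta$, and apply the monotonicity to the pair $(\xi_s,X^0_T)$ to get $\mb{E}[(F(\xi_s)-F(X^0_T))\,s\delta]\ge 0$ for every $s$; dividing by $s^2$ and letting $s\to 0$ identifies the G\^ateaux derivative of $F$ at $X^0_T$ in direction $\delta$ with $G(\delta)$ (this is exactly the linearization that produced \eqref{var} from \eqref{fbsde_delta}), so that
\[
\mb{E}\!\left[\,G(\delta)\,\delta\,\right]
= \mb{E}\!\left[\dd{xx}g(X^0_T,m^0_T)\,\delta^2\right]
+ \mb{E}\!\left[\delta\,\hat{\mb{E}}\!\left[\dd{xm}g(X^0_T,m^0_T)(\hX^0_T)\hat{\delta}\right]\right]\ge 0.
\]
Hence $\mb{E}[(G(\eta)-G(\eta'))(\eta-\eta')]\ge 0$, the required monotonicity.

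With $G$ Lipschitz and monotone, the general theorem of Appendix \ref{fbsde_monotone_functionals} delivers a unique adapted solution $(U_t,V_t,Q_t,\tQ_t)_{0\le t\le T}$ together with the bound \eqref{estimate_var}, where the constant $C_{K,T}$ traces back to the only source term, the common-noise input $\tW$, whose quadratic variation over $[0,T]$ is $T$. For intuition on the estimate one can also argue directly: Itô's formula applied to $U_tV_t$, using $d\langle U,V\rangle_t=\tQ_t\,dt$, yields $\mb{E}[U_TV_T]=-\mb{E}\int_0^T V_t^2\,dt+\mb{E}\int_0^T\tQ_t\,dt$, and since $\mb{E}[U_TV_T]=\mb{E}[U_TG(U_T)]\ge 0$ by monotonicity, one controls $\mb{E}\int_0^T V_t^2\,dt$; combining with the energy identities from applying Itô to $U_t^2$ and $V_t^2$ and a Gronwall argument closes \eqref{estimate_var}.

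The main obstacle I anticipate is making the differentiation of Assumption \ref{mon} rigorous within the lifting framework of Appendix \ref{derivative}. One must justify interchanging the limit $s\to 0$ with the expectation, using the boundedness and Lipschitz continuity of $\dd{xx}g$ and $\dd{xm}g$ supplied by Assumption \ref{second}, and then verify that the G\^ateaux derivative of $\xi\mapsto\d{x}g(\xi,\Law(\xi))$ coincides with the operator $G$ defined through the copied space, correctly accounting for the measure-derivative term $\dd{xm}g$ and the way it couples all the player paths $\{(\hX^0_T,U_T)(\h{\omega},\cdot)\}$ in \eqref{expand_integral}. Once this identification is secured, monotonicity and Lipschitz continuity follow as above and the general theory applies verbatim.
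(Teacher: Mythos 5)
Your proposal follows essentially the same route as the paper: define the linear terminal functional $G(\xi)=\dd{xx}g(X^0_T,m^0_T)\xi+\hE[\dd{xm}g(X^0_T,m^0_T)(\hX^0_T)\h{\xi}]$, check that it is Lipschitz and monotone, and invoke the general well-posedness theorem for FBSDE with monotone functionals from Appendix \ref{fbsde_monotone_functionals}; in fact you supply the differentiation-of-Assumption-\ref{mon} argument for monotonicity that the paper only asserts. The one detail worth adding is that the appendix theorem requires the Lipschitz and monotonicity conditions in the conditional form with $\Ind_{A_T}$, $A_T\in\tF_T$, which follows from your unconditional version because $\Ind_{A_T}$ depends only on the common noise and hence commutes with $\hE[\cdot]$, giving $\Ind_{A_T}G(\delta)=G(\Ind_{A_T}\delta)$ by linearity.
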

  \begin{proof}  We define a functional $G: \Ltwo_\F \to \Ltwo_\F$ by 
 $$G(\xi) = \dd{xx}g(X^0_T, m^0_T)\xi + \h{\mb{E}}\l[ \dd{xm}g(X^{0}_{T},m^0_T)(\hX^0_T)\h{\xi} \r]$$
where $\hX^0_T, \h{\xi}$ are identical copies of $X^0_T,\xi$ in the copied space  $(\h{\Omega}^0 \times \t{\Omega},\h{\F} \otimes \tF,\h{\mb{P}} \otimes \tP)$ and $\hmbE[\cdot]$ is the expectation with respect to $\hw$ only. Then by \ref{lip_x}-\ref{second}, it follows that $G$ satisfies functional Lipschitz and monotone properties. That is, for any $\xi_1,\xi_2 \in \Ltwo_{\F}, A_T \in \tF_T$,
	\begin{gather*}
		 \mb{E}[\Ind_{A_T}(G(\xi_1)-G(\xi_2))^2] \leq K^2\mb{E}[\Ind_{A_T}(\xi_1-\xi_2)^2] \label{lipG} \\
		\mb{E}[\Ind_{A_T}(G(\xi_1)-G(\xi_2))(\xi_1-\xi_2)] \geq 0 \label{monG}
	\end{gather*}
The existence and uniqueness then follows from Theorem 1 in \cite{ahuja2016forward} (the statement is also provided in Appendix \ref{fbsde_monotone_functionals}). 
 \end{proof}

We are now ready to state our first main result which establishes the differentiability of $\eps$-MFG solution with respect to $\eps$.

\begin{thm}\label{diff} Assume \ref{lip_x}-\ref{second} hold, let $(\Xe_t,\Ye_t,\Ze_t,\tZe_t)_{0 \leq t \leq T}$ denote the solution to \rref{smp_mfg_2} and $(U_t,V_t,Q_t,\tQ_t)_{0 \leq t \leq T}$ denote the solution to \rref{var}, then there exist a constant $C_{K,T}$ depending only on $K,T$ such that
 \begin{equation}\label{converge_xy}
 \mb{E} \sup_{0\leq t \leq T} \l[\l(\frac{X^\eps_t-X^0_t}{\eps} - U_t\r)^2 + \l(\frac{Y^\eps_t-Y^0_t}{\eps} - V_t\r)^2 \r]  \leq C_{K,T}\eps^2
 \end{equation}
\end{thm}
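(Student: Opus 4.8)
The plan is to analyze the difference process $\delX := \dXe - U$, together with $\delY := \dYe - V$, $\delZ := \dZe - Q$, $\deltZ := \dtZe - \tQ$, and to bound it by $\eps$. The key structural point is that $\dXe$ (solving \rref{fbsde_delta}) and $U$ (solving \rref{var}) carry the \emph{same} common-noise coefficient $d\tW_t$, so the diffusion cancels in the difference: $\delX$ has \emph{no} martingale part, $d\delX_t = -\delY_t\,dt$ with $\delX_0 = 0$, while $d\delY_t = \delZ_t\,dW_t + \deltZ_t\,d\tW_t$. Hence $(\delX,\delY,\delZ,\deltZ)$ solves a linear FBSDE whose only inhomogeneity is the terminal datum $\delY_T = \dYe_T - V_T$, and the problem splits into (i) rewriting this terminal datum as a monotone functional of $\delX_T$ plus a small remainder, and (ii) propagating smallness through the monotonicity estimate. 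As a preliminary I would note that the terminal functional of \rref{fbsde_delta}, namely $G^\eps(\xi) := \eps^{-1}\big(\d{x}g(\Xo_T+\eps\xi,\Law(\Xo_T+\eps\xi\mid\tF_T)) - \d{x}g(\Xo_T,m^0_T)\big)$, is Lipschitz and monotone with constants uniform in $\eps$ and satisfies $G^\eps(0)=0$ (rewrite the difference quotients as increments of $\d{x}g$ along $\Xo_T+\eps\xi$ and invoke \ref{lip_x}, \ref{lip_m}, \ref{mon}). Applying the monotone-FBSDE a priori estimates behind \rref{estimate_var} to \rref{fbsde_delta} then gives bounds $\mb{E}\sup_t|\dXe_t|^p \le C_{K,T,p}$ uniform in $\eps$; I will need this for $p=4$.

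For the terminal decomposition I would Taylor-expand $\d{x}g$ about $(\Xo_T,m^0_T)$ along the segment $\lambda\mapsto(\Xo_T+\lambda\eps\dXe_T,\,m^\lambda_T)$ with $m^\lambda_T := \Law(\Xo_T+\lambda\eps\dXe_T\mid\tF_T)$, differentiating the measure argument via the linear-functional derivative $\dd{xm}g$ of Assumption \ref{second}. This yields $\dYe_T = G(\dXe_T) + R^\eps$, where
\[ G(\xi) = \dd{xx}g(\Xo_T,m^0_T)\xi + \hat{\mb{E}}\big[\dd{xm}g(\Xo_T,m^0_T)(\hX^0_T)\hat\xi\big] \]
is exactly the monotone Lipschitz functional $G$ from the proof of the preceding theorem (well-posedness of \rref{var}), so that $V_T = G(U_T)$, and
\[ R^\eps = \int_0^1\!\big(\dd{xx}g(\Xo_T+\lambda\eps\dXe_T,m^\lambda_T)-\dd{xx}g(\Xo_T,m^0_T)\big)\dXe_T\,d\lambda + (\text{analogous }\dd{xm}g\text{ term}) \]
is a second-order remainder. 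Since $G$ is linear, $\delY_T = \dYe_T - V_T = G(\delX_T) + R^\eps$, the desired form.

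The a priori estimate is then short. Applying \ito's formula to $\delX_t\delY_t$ and using $\delX_0 = 0$ and the absence of a martingale part in $\delX$, I get $\mb{E}[\delX_T\delY_T] = -\mb{E}\int_0^T\delY_t^2\,dt$. Substituting $\delY_T = G(\delX_T)+R^\eps$ and using monotonicity $\mb{E}[\delX_T G(\delX_T)]\ge 0$ (with $G(0)=0$) gives $\mb{E}\int_0^T\delY_t^2\,dt \le \|\delX_T\|_2\,\|R^\eps\|_2$; since $\delX_t = -\int_0^t\delY_s\,ds$ forces $\|\delX_T\|_2^2 \le T\,\mb{E}\int_0^T\delY_t^2\,dt$, the two inequalities close up to give $\mb{E}\int_0^T\delY_t^2\,dt \le T\|R^\eps\|_2^2$. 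The supremum bounds follow: $\mb{E}\sup_t\delX_t^2 \le T\,\mb{E}\int_0^T\delY_t^2\,dt$, and Doob's inequality applied to the martingale $\delY_t = \mb{E}[G(\delX_T)+R^\eps\mid\F_t]$, together with the Lipschitz bound on $G$, controls $\mb{E}\sup_t\delY_t^2$ by $\mb{E}[\delX_T^2]+\|R^\eps\|_2^2$. Everything is thus dominated by $\|R^\eps\|_2^2$, and it remains to show $\|R^\eps\|_2\le C_{K,T}\eps$.

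This last remainder estimate is where I expect the main difficulty. Using Lipschitz continuity of $\dd{xx}g$ and $\dd{xm}g$ (Assumption \ref{second}), each bracketed increment in $R^\eps$ has size $\le K\big(\lambda\eps|\dXe_T| + \W_2(m^\lambda_T,m^0_T)\big) \le CK\eps\big(|\dXe_T| + \|\dXe_T\|_2\big)$, and multiplying by the extra factor $\dXe_T$ makes $R^\eps$ quadratic in the difference quotient; consequently $\mb{E}[(R^\eps)^2]$ is controlled by $\eps^2\,\mb{E}[|\dXe_T|^4]$, which is exactly why the uniform fourth-moment bound from the preliminary step is needed. The delicate part is the measure-derivative contribution: one must lift $m^\eps_T = \Law(\Xe_T\mid\tF_T)$ to the copied space, expand $\d{x}g$ along the interpolating conditional laws $m^\lambda_T$ as in \rref{expand_integral}, and verify that this expansion is legitimate and that its remainder is genuinely $O(\eps)$ in $\Ltwo$ — it is precisely this step that requires $\d{x}g$ to be $C^1$ in $m$ with the bounded, $\Ltwo$-Lipschitz derivative $\dd{xm}g$ imposed in Assumption \ref{second}.
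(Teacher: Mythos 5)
Your proposal follows essentially the same route as the paper's proof: form the difference process $\delta^{X,\eps}_t=\Delta X^\eps_t-U_t$ (whose forward noise cancels), Taylor-expand the terminal condition along the interpolation $X^{\lambda,\eps}_T$ into a monotone, Lipschitz linear functional of the difference plus an $O(\eps)$ remainder, and close the estimate via monotonicity/duality — which the paper invokes abstractly through estimate \rref{estimate} of Theorem \ref{wellposed} and you carry out by hand with the \ito\ product formula and Doob's inequality. The only substantive divergence is bookkeeping: you attach the Lipschitz increments of $\dd{xx}g,\dd{xm}g$ to $\Delta X^\eps_T$, hence your explicit appeal to a uniform fourth-moment bound on $\Delta X^\eps_T$ (true, e.g.\ via the Lipschitz decoupling field, but not literally contained in the quoted $\Ltwo$ estimates), whereas the paper attaches them to $U_T$ and passes over the analogous cross-moment $\mb{E}[|\Delta X^\eps_T|^2U_T^2]$ without comment, so this is not a gap relative to the paper's own argument.
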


\begin{proof} See Appendix \ref{proof_convergence}.
\end{proof}

We are able to obtain the convergence result above in a strong sense (in $\Ltwo$) mainly due to the monotone property of our setting. As seen in \cite{peng1999,hu1995,ahuja2016forward}, the monotone property of an FBSDE enables the proof for existence and uniqueness which relies on the standard SDE estimates and probabilistic tools. Similarly here, we are able to apply such tools to prove the convergence in $\Ltwo$.


\subsection{Approximate Nash equilibrium for $\eps$-MFG}\label{approx_sec}

We have shown that
$$ \frac{\hae_t - \ha^0_t}{\eps} = \frac{-\Ye_t +Y^0_t}{\eps}\to  -V_t \qquad\text{as }\eps \to 0 $$
where the limit is in $\Htwo$. Using this result, we construct the first order approximate strategy by
\be
\beta^\eps_t \triangleq \ha^0_t - \eps V_t, \quad \forall t \in [0,T]
\ee
Being a game, an appropriate notion of approximation is required to see if $(\beta^\eps_t)_{0 \leq t \leq T}$ serves as a good approximation. In this case, it is reasonable to assume that each player adopts this approximate solution and analyze the gap between the expected cost under this set of strategies and the optimal cost. For an exact Nash equilibrium, this gap is precisely zero by definition as every player is optimal given the other players' strategy. This notion of approximate optimality is called $\delta$-Nash equilibrium.\footnote{It is conventionally called $\eps$-Nash equilibrium. We use the parameter $\delta$ here to avoid confusion with the parameter $\eps$ denoting the level of common noise} In a finite-player game, it is defined as follows. 

\begin{definition} Under the same notations as defined in section \ref{mfg_commonnoise}, for the $N$-player game, a set of admissible strategies $(\alpha^i_t)_{0 \leq t \leq T, 1 \leq i \leq N}$ is called a $\delta$-Nash equilibrium if for each $i=1,2,\dots,N$,
$$ \J^i\l(\alpha^i | (\alpha^j)_{j \neq i} \r) \leq  \J^i\l(\beta| (\alpha^j)_{j \neq i} \r) + \delta  $$
for all $\beta =(\beta_t)_{0 \leq t \leq T} \in \Htwo$ where $\J^i(\cdot)$ denote the cost function of player $i$.
\end{definition}  

To go from a finite-player symmetric game to its limit, we formally take $N \to \infty$, assume that each player adopts the same strategy, and use a single player as a representative player. As a result, we can define an approximate Nash equilibrium similarly for MFG as follows;

\begin{definition}\label{approx_nash_def} Under the same notations as defined in section \ref{mfg_commonnoise}, an admissible strategy $\alpha = (\alpha_t)_{0 \leq t \leq T} \in \Htwo$ is called a $\delta$-Nash equilibrium for $\eps$-MFG problem if
$$ \Je(\alpha | m^{\alpha}) \leq \Je(\beta | m^{\alpha}) + \delta $$
for all $\beta = (\beta_t)_{0 \leq t \leq T}$ where $\Je(\cdot)$ denotes the cost function and $m^\alpha_t$ denotes the conditional law of $X^\alpha_t$ with $(X^\alpha_t)_{0 \leq t \leq T} $ being the state process corresponding to $\alpha$.
\end{definition}  

\begin{remark} By definition, an $\eps$-MFG solution is a $0$-Nash equilibrium for an $\eps$-MFG problem.
\end{remark}

The notion of an approximate Nash equilibrium is important in the theory of stochastic games with infinite horizon where for many problems, there is no exact Nash equilibrium while there exists a $\delta$-Nash equilibrium for any $\delta > 0$. It is also a widely used notion in \textit{algorithmic game theory} where the main interest is in finding a polynomial time algorithm that yields an approximate Nash equilibrium solution when finding an exact Nash equilibrium is computationally expensive. 

In MFG, this notion is used mainly in the study of the relation between an MFG and a symmetric $N$-player stochastic differential game. Recall that the motivation for considering an MFG model is in its application for finding a good approximate strategy for an $N$-player game when $N$ is large. In \cite{carmona2013probabilistic}, Carmona and Delarue show that under a linear-convex MFG model without common noise, the $0$-MFG strategy is $\eps_N$-Nash equilibrium for the corresponding $N$-player game with $\eps_N \sim O(N^{-1/(d+4)})$ where $d$ is the dimension of the underlying Euclidean space. See also \cite{carmona2013weak, huang2007, li2011} for other similar results. The converse, which asks whether the Nash equilibrium from $N$-player game converges to a corresponding MFG solution, is also of interest and is more challenging. For interested readers, we refer to \cite{fischer2014} and reference therein for results in this direction all of which are for MFG models without common noise.
  
In this paper, we are only concerned with the model at the limit with a continuum of players. We are particularly interested in an approximate solution for $\eps$-MFG using the information from $0$-MFG solution. Our main result for this section is the following theorem

\begin{thm}\label{approx_nash} Assume\ref{lip_x}-\ref{second} hold. For $\eps > 0$, let $\hae=(\hae_t)_{0 \leq t \leq T}$ denote the solution to the $\eps$-MFG and $(U_t,V_t,Q_t,\tQ_t)_{0 \leq t \leq T}$ denote the solution to the linear variation FBSDE \rref{var}. Define a first order approximate strategy $\beta^\eps = (\beta^\eps_t)_{0 \leq t \leq T}$ by
\be
\beta^\eps_t \triangleq \ha^0_t - \eps V_t
\ee
Then $\beta^\eps$ is an $\eps^2$-Nash equilibrium for $\eps$-MFG.
\end{thm}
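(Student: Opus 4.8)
The plan is to verify Definition~\ref{approx_nash_def} directly. Freeze the flow $m^{\beta^\eps}=(\Law(X^{\beta^\eps}_t\mid\tF_t))_{0\le t\le T}$ generated when every player uses $\beta^\eps$; the task is to bound the best-response gap $\Je(\beta^\eps\mid m^{\beta^\eps})-\inf_{\gamma\in\Htwo}\Je(\gamma\mid m^{\beta^\eps})$ by $C_{K,T}\eps^2$. Let $\as\in\Htwo$ be the minimizer of the convex functional $\Je(\cdot\mid m^{\beta^\eps})$, i.e.\ the best response to the \emph{exogenous} flow $m^{\beta^\eps}$, characterized by the FBSDE~\rref{smp_fbsde} with terminal datum $\d{x}g(\cdot,m^{\beta^\eps}_T)$ and optimal control $\as=-Y^*$, and write $X^{\as},X^{\beta^\eps}$ for the corresponding states~\rref{state}. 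The proof reduces to (i) a convexity bound $\Je(\beta^\eps\mid m^{\beta^\eps})-\Je(\as\mid m^{\beta^\eps})\le C_{K,T}\|\beta^\eps-\as\|_{\Htwo}^2$, and (ii) a control estimate $\|\beta^\eps-\as\|_{\Htwo}\le C_{K,T}\eps$.

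For (i) I would exploit that the state~\rref{state} is affine in the control while the running cost is exactly quadratic. With $\eta:=\beta^\eps-\as$ and $\delta X_T:=\int_0^T\eta_s\,ds$ (so that $X^{\beta^\eps}_T=X^{\as}_T+\delta X_T$), expanding the quadratic cost exactly and substituting the first-order optimality condition $\mb{E}\int_0^T\as_t\eta_t\,dt=-\mb{E}[\d{x}g(X^{\as}_T,m^{\beta^\eps}_T)\,\delta X_T]$ (the vanishing Gateaux derivative of $\Je(\cdot\mid m^{\beta^\eps})$ at its minimizer $\as$) gives the exact identity
\begin{equation*}
\begin{split}
\Je(\beta^\eps\mid m^{\beta^\eps})-\Je(\as\mid m^{\beta^\eps})
&=\tfrac12\,\mb{E}\!\int_0^T\eta_t^2\,dt\\
&\quad+\mb{E}\Big[g(X^{\beta^\eps}_T,m^{\beta^\eps}_T)-g(X^{\as}_T,m^{\beta^\eps}_T)-\d{x}g(X^{\as}_T,m^{\beta^\eps}_T)\,\delta X_T\Big].
\end{split}
\end{equation*}
The bracket is the second-order Taylor remainder of $g$ in $x$, which is nonnegative by convexity (Assumption~\ref{convex}) and at most $\tfrac{K}{2}\mb{E}[(\delta X_T)^2]\le\tfrac{KT}{2}\|\eta\|_{\Htwo}^2$ by $0\le\dd{xx}g\le K$ (Assumption~\ref{second}) and Cauchy--Schwarz. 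This is exactly (i).

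For (ii) the key structural fact is that $\beta^\eps$ exactly realizes the first-order state expansion. Setting $\t Y:=Y^0+\eps V$, one reads off from $dU_t=-V_t\,dt+d\tW_t$ and $dX^0_t=-Y^0_t\,dt+\sigma\,dW_t$ that $\beta^\eps=-\t Y$ and $X^{\beta^\eps}_t=X^0_t+\eps U_t$, while $\t Y$ is a martingale as a sum of martingales. Hence $(X^{\beta^\eps},\t Y)$ solves the forward and backward equations of~\rref{smp_fbsde} for $m^{\beta^\eps}$ \emph{exactly}, failing only the terminal condition through the residual
\begin{equation*}
\begin{split}
R &:=\d{x}g\big(X^{\beta^\eps}_T,m^{\beta^\eps}_T\big)-\t Y_T\\
&=\d{x}g\big(X^0_T+\eps U_T,\Law(X^0_T+\eps U_T\mid\tF_T)\big)-\big(Y^0_T+\eps V_T\big).
\end{split}
\end{equation*}
Since $Y^0_T=\d{x}g(X^0_T,m^0_T)$, Assumptions~\ref{lip_x} and~\ref{lip_m}, together with $\|U_T\|_2,\|V_T\|_2\le C_{K,T}$ from~\rref{estimate_var}, already give $\|R\|_2\le C_{K,T}\eps$ (using $\W_2(m^{\beta^\eps}_T,m^0_T)\le\eps\,(\mb{E}[U_T^2\mid\tF_T])^{1/2}$ via the coupling $(X^0_T+\eps U_T,X^0_T)$); moreover, since $V_T$ is by design the first-order terminal datum of~\rref{var}, the second-order regularity of Assumption~\ref{second} sharpens this to $\|R\|_2\le C_{K,T}\eps^2$. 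Comparing $(X^{\beta^\eps},\t Y)$ with the exact solution $(X^{\as},Y^*)$ of the \emph{same} monotone FBSDE for $m^{\beta^\eps}$ --- whose terminal data differ only by the monotone and Lipschitz-in-$x$ increment $\d{x}g(X^{\beta^\eps}_T,m^{\beta^\eps}_T)-\d{x}g(X^{\as}_T,m^{\beta^\eps}_T)$ plus the forcing $R$ --- the stability estimate for monotone FBSDE (Appendix~\ref{fbsde_monotone_functionals}, cf.\ \cite{ahuja2016forward}) gives $\|\beta^\eps-\as\|_{\Htwo}=\|\t Y-Y^*\|_{\Htwo}\le C_{K,T}\|R\|_2\le C_{K,T}\eps$. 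Substituting into (i) bounds the gap by $C_{K,T}\eps^2$, as claimed.

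I expect step (ii), and in particular the expansion of the terminal datum, to be the main obstacle. The measure-derivative contribution $\hat{\mb{E}}[\dd{xm}g(X^0_T,m^0_T)(\hX^0_T)\hU_T]$ hidden in $V_T$ must be manipulated on the copied space as in~\rref{expand_integral}; obtaining the sharp $\|R\|_2=O(\eps^2)$ needs the second-order Lipschitz bounds~\rref{lip_second}, a uniform control of higher moments of $U_T$ (available from $L^p$-estimates for the linear FBSDE~\rref{var}, whose coefficients are bounded), and a comparison of $\Law(X^0_T+\eps U_T\mid\tF_T)$ with $m^0_T$ in $\W_2$ that is uniform in the common-noise path. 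The other delicate input is the monotone-FBSDE stability estimate that converts the terminal residual into the control bound, which is where the monotonicity structure of Assumption~\ref{mon} is essential. An alternative to the residual computation is to route through Theorem~\ref{diff}: since $\hae=-\Ye$ and $\beta^\eps=-\t Y$, the rate~\rref{converge_xy} yields $\|\beta^\eps-\hae\|_{\Htwo}\le C_{K,T}\eps^2$, after which $\|\hae-\as\|_{\Htwo}$ is controlled by the Lipschitz dependence of the best response on the $O(\eps^2)$ perturbation of the flow; this gives the same conclusion.
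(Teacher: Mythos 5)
Your proposal is correct, and it takes a genuinely different route from the paper. The paper never introduces the best response $\as$ to the frozen flow $m^{\beta^\eps}$: it compares $\beta^\eps$ directly with the exact equilibrium $\hae$, writes the Nash gap as a three-term telescoping sum involving $\Je(\cdot\mid\hae)$ and $\Je(\cdot\mid\beta^\eps)$, controls the flow-swapping terms by a Lipschitz estimate $|\Je(\alpha\mid\beta^{(1)})-\Je(\alpha\mid\beta^{(2)})|\le C(\int_0^T|\beta^{(1)}_t-\beta^{(2)}_t|^2dt)^{1/2}$, and then feeds in the full $O(\eps^2)$ rate $\|\hae-\beta^\eps\|_{\Htwo}\le C_{K,T}\eps^2$ from Theorem~\ref{diff}; because the flow-comparison term enters only through a square root, the quadratic convergence rate is essential there. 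Your argument instead needs only $\|\beta^\eps-\as\|_{\Htwo}=O(\eps)$, which you then square via the exact second-order expansion of the linear-quadratic-convex cost around its unconstrained minimizer; the quantitative input is an elementary terminal-residual computation for the ansatz $(X^0+\eps U,\,Y^0+\eps V)$ combined with the Peng--Wu type monotone stability estimate $\|\t Y-Y^*\|_{\Htwo}\le\sqrt{T}\,\|R\|_2$. This makes your proof independent of Theorem~\ref{diff} (the a priori bound \rref{estimate_var} and first-order Lipschitz assumptions \ref{lip_x}, \ref{lip_m} suffice for $\|R\|_2=O(\eps)$), and it isolates cleanly why first-order accuracy of the control yields a second-order Nash gap. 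Two small observations: the sharpening $\|R\|_2=O(\eps^2)$ that you flag as the main obstacle (and which would require $L^4$ moments of $U_T$) is never used in your chain of inequalities, so it can simply be dropped; and the alternative you sketch in your final sentence, routing through Theorem~\ref{diff} and the Lipschitz dependence of the cost on the flow, is essentially the paper's own proof.
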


\begin{proof} See Appendix \ref{proof_approx_nash}.
\end{proof}

\subsection{Decoupling function of the linear variation FBSDE}  Despite being linear, the FBSDE \rref{var} is not trivial to solve due to the mean field term $\hE\l[ \dd{xm}g(X^0_T,m_T^0)(\hX^0_T)\hU_T \r]$. Nonetheless, we proceed in the similar way as solving a classical FBSDE by attempting to find a decoupling function describing $V_t$ as a function of $U_t$. Recall that we have a decoupling function $\Ue$ satisfying the relation
 $$ Y_t^{\eps} = \Ue(t,X^{\eps}_t,\Law(X^{\eps}_t|\tF_t)) $$
Therefore, we can write
\begin{equation}\label{ue_decompose}
\begin{aligned}
V_t 	&= \lim_{\eps \to 0} \frac{\Ye_t-Y^0_t}{\eps}  \\
	&= \lim_{\eps \to 0} \frac{\Ue(t,\Xe_t,\Law(\Xe_t| \tF_t))- \U^0(t,X^0_t,\Law(X^0_t))}{\eps}  \\
 	&= \lim_{\eps \to 0} \frac{\Ue(t,\Xe_t,\Law(\Xe_t| \tF_t))-\U^0(t,\Xe_t,\Law(\Xe_t| \tF_t))+\U^0(t,\Xe_t,\Law(\Xe_t| \tF_t))-\U^0(t,X^0_t,\Law(X^0_t))}{\eps} \\
	&= \lim_{\eps \to 0} \frac{\Ue(t,\Xe_t,\Law(\Xe_t| \tF_t))-\U^0(t,\Xe_t,\Law(\Xe_t| \tF_t))}{\eps}+ \lim_{\eps \to 0}\frac{\U^0(t,\Xe_t,\Law(\Xe_t| \tF_t))-\U^0(t,X^0_t,\Law(X^0_t))}{\eps} 
\end{aligned}
\end{equation}
where the limit is in $\Ltwo_{\F}$. The proposition below shows that the first part is in fact zero.


\begin{prop}\label{meanzero_smp} Assume \ref{lip_x}-\ref{mon} holds. Let $\Ue$ denote the decoupling function of FBSDE \rref{smp_mfg} as defined in \rref{decouple_smp}, then the following holds;
\begin{equation}\label{ueuo} 
	\lim_{\eps \to 0} \frac{\Ue(t,x,m) - \U^0(t,x,m)}{\eps} = 0 
\end{equation}
uniformly in $(t,x,m) \in [0,T] \times \R \times \Ptwo$.
\end{prop}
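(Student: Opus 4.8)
The plan is to express the quotient through the sub-game decoupling FBSDE \rref{mkfbsde_sub}, identify its $\eps\to0$ limit as the time-$s$ value of a linear variational FBSDE, and then show that this value vanishes by a sign-flip symmetry of the common noise. Fix $(s,x,m)$ and let $(X^{\eps;s,x,m},Y^{\eps;s,x,m},\dots)$ solve \rref{mkfbsde_sub} at level $\eps$, with $\eps=0$ counterpart $(X^{0;s,x,m},Y^{0;s,x,m},\dots)$. By definition $\Ue(s,x,m)=Y^{\eps;s,x,m}_s$ and $\U^0(s,x,m)=Y^{0;s,x,m}_s$, both of which are deterministic, so
\be
\frac{\Ue(s,x,m)-\U^0(s,x,m)}{\eps}=\frac{Y^{\eps;s,x,m}_s-Y^{0;s,x,m}_s}{\eps}.
\ee
Running the argument of Theorem \ref{diff} on the sub-game \rref{mkfbsde_sub}, which satisfies \ref{lip_x}--\ref{second} with the same constants, the right-hand side converges in $\Ltwo$ to $V^{s,x,m}_s$, the time-$s$ value of the backward component of the sub-game analogue of the variational FBSDE \rref{var}; moreover the bound \rref{converge_xy} yields $\mb{E}\big[(\eps^{-1}(Y^{\eps}_s-Y^{0}_s)-V_s)^2\big]\le C_{K,T}\eps^2$ with $C_{K,T}$ independent of $(s,x,m)$.

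The crux is to show $V^{s,x,m}_s=0$. Consider the map $\tW\mapsto-\tW$ on the common-noise space. At $\eps=0$ the sub-game forward state $X^{0}$ and all its population copies $\hX^{0}$ solve an SDE driven only by the individual noise $W$ (the common-noise term drops when $\eps=0$), hence are independent of $\tW$ and unchanged by the flip; in particular the terminal coefficients $\dd{xx}g(X^0_T,m^0_T)$ and $\dd{xm}g(X^0_T,m^0_T)(\hX^0_T)$ are invariant. One checks directly that the forward equation $dU_t=-V_t\,dt+d\tW_t$, the backward equation $dV_t=Q_t\,dW_t+\tQ_t\,d\tW_t$, and the terminal condition of \rref{var} are all mapped into the same system driven by $-\tW$ once $(U,V,Q,\tQ)$ is replaced by $(-U,-V,-Q,\tQ)$. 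Since \rref{var} and its sub-game analogue are uniquely solvable, $(-U,-V,-Q,\tQ)$ is precisely the solution associated with the driving noise $-\tW$. As $-\tW$ has the same law as $\tW$, the law of $V_s$ coincides with that of $-V_s$; because $V_s$ is deterministic this forces $V_s=0$.

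Combining the two steps,
\be
\Big|\frac{\Ue(s,x,m)-\U^0(s,x,m)}{\eps}\Big|=\Big|\frac{Y^{\eps;s,x,m}_s-Y^{0;s,x,m}_s}{\eps}-V^{s,x,m}_s\Big|\le \sqrt{C_{K,T}}\,\eps,
\ee
and since $C_{K,T}$ does not depend on $(s,x,m)$ the limit \rref{ueuo} holds uniformly on $[0,T]\times\R\times\Ptwo$; in fact one even obtains the stronger bound $|\Ue(s,x,m)-\U^0(s,x,m)|\le\sqrt{C_{K,T}}\,\eps^2$.

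I expect the main obstacle to be the first step: transporting the proof of Theorem \ref{diff} to the family of sub-games \rref{mkfbsde_sub} with constants uniform in $(s,x,m)$, and handling the two-layer structure in which the tagged variation $U$, started from the deterministic point $x$, is coupled through the mean-field terminal term to the population variation $\hU$, started from $m$. Once the limiting variational system is in hand, the symmetry step is robust, since it uses only uniqueness of \rref{var} and the $\tW$-independence of the $\eps=0$ state, and no explicit solution. A secondary point to verify is the claim that $V_s$ is deterministic, which is immediate here because it is the $\Ltwo$-limit of the deterministic quantities $\eps^{-1}(\Ue-\U^0)$.
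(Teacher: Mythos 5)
Your proposal is correct, and its first half coincides with the paper's proof: both fix $(s,x,m)$, rerun the argument of Theorem \ref{diff} on the sub-game FBSDE \rref{mkfbsde_sub} to obtain $\mb{E}\bigl[(\eps^{-1}(Y^\eps_s-Y^0_s)-\bV_s)^2\bigr]\le C_{K,T}\eps^2$ with a constant uniform in $(s,x,m)$, and thereby reduce the claim to showing that the time-$s$ value of the variational backward component vanishes. Where you diverge is in that vanishing step. The paper takes the expectation $\tE[\cdot]$ over the common noise: using Theorem \ref{meanzero} to get $\tE[\hU_T]=0$ (which kills the mean-field term in the terminal condition) together with the $\tW$-independence of $X^0$, it shows that $(\tE[\bU_t],\tE[\bV_t])$ solves a linear FBSDE with zero initial and terminal data, hence is identically zero by uniqueness. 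You instead exploit the anti-symmetry of the whole variational system under the measure-preserving flip $\tW\mapsto-\tW$: pathwise uniqueness identifies the flipped solution with $(-U,-V,-Q,\tQ)$, and since $V_s$ is deterministic (as the $\Ltwo$-limit of the deterministic quantities $\eps^{-1}(\Ue-\U^0)$, as you note) this forces $V_s=-V_s=0$. Your route is more self-contained: it bypasses the semigroup/stochastic-convolution machinery behind Theorem \ref{meanzero} and in fact recovers its mean-zero statement as a byproduct (the flip sends $U_t$ to $-U_t$ while preserving $\tP$, so $\tE[U_t]=-\tE[U_t]$), at the modest cost of justifying that the flip commutes with the stochastic integrals and respects the two-layer coupling (one must first flip the population system \rref{var} to get $\hU\mapsto-\hU$, and only then the tagged system). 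The paper's route buys the stronger structural information (Gaussianity of $U$) that it establishes anyway. Both arguments yield the same quantitative conclusion $|\Ue(s,x,m)-\U^0(s,x,m)|\le C_{K,T}\eps^2$, stronger than the stated $o(\eps)$.
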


\begin{proof} See Appendix \ref{proof_ueuo}
\end{proof}

The result above implies that, at the first order, the decoupling function for $\eps$-MFG and $0$-MFG is the same. Combining with the recent result by Chassagneux et al.\cite{delarue2014classical} which proves the existence of a classical solution $\U^0$ of equation \rref{master_ue} with $\eps=0$, we have the decoupling \textit{functional} for FBSDE \rref{var}. To apply such result, an extra regularity assumption for $g$ is needed

\begin{assump}\label{third} For all $m \in \Ptwo$, the map $(x,z) \mapsto\dd{mx}g(x,m)(z),$ is continuously differentiable and satisfies for all  $x,x',\alpha \in \R$,$m,m' \in \Ptwo$, and $\xi,\xi' \in \Ltwo_\F$ with law $m,m'$,
\be\label{lip_second_m}
\begin{gathered}
\mb{E}\l[\l( \d{z}\dd{xm}g(x,m)(\xi) - \d{z}\dd{xm}g(x',m')(\xi')\r)^2\r]^{\frac{1}{2}} \leq K \l( |x-x'| + \|\xi-\xi'\|_2  \r)
\end{gathered}
\ee
\end{assump}

\begin{remark} The map $(x,z) \mapsto\dd{xm}g(x,m)(z)$ is related the notion of derivative with respect to a probability measure as described in Appendix \ref{derivative}. For instance, if $f(x,m) =  (x-\int_{\R} y dm(y))^2$, then the lifting functional is $\tilde{f}(x,\xi) = (x- \mb{E}[\xi])^2$, and the \frechet derivative is given by  $D\tilde{f}(x,\xi)= 2(x-\mb{E}[\xi]) = \mb{E}[ 2(x-\xi)]$. Thus, $\d{m}f(m)(z) = 2(x-z)$. 
\end{remark} 

The following theorem gives the decoupling functional for the linear variational process \rref{var}.

\begin{thm}\label{decouple_uv} Assume \ref{lip_x}-\ref{third} holds. Let $(U_t,V_t,Q_t,\tQ_t)_{0 \leq t \leq T}$ denote the solution to \rref{var}, $\Uo$ denote the decoupling function for $0$-MFG defined in section \ref{decoupling_markov}, then 
\begin{equation}\label{vt}
V_t = \d{x}\U^0(t,X_t^0, m^0_t)U_t + \hE[\d{m}\U^0(t,X_t^0, m^0_t)(\hX^0_t)\hU_t]  
\end{equation}
\end{thm}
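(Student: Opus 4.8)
The plan is to read off $V_t$ from the decomposition \eqref{ue_decompose}, eliminate the first of its two limits by Proposition~\ref{meanzero_smp}, and recognize the surviving limit as a joint first-order expansion of the $0$-MFG decoupling function $\U^0$ in both its spatial argument $x$ and its measure argument $m$. Since Proposition~\ref{meanzero_smp} gives that
\[
\lim_{\eps \to 0} \frac{\Ue(t,\Xe_t,\Law(\Xe_t|\tF_t))-\U^0(t,\Xe_t,\Law(\Xe_t|\tF_t))}{\eps} = 0
\]
in $\Ltwo_{\F}$, it remains to show
\[
\lim_{\eps \to 0}\frac{\U^0(t,\Xe_t,\Law(\Xe_t|\tF_t)) - \U^0(t,\Xo_t,\Law(\Xo_t))}{\eps} = \d{x}\U^0(t,\Xo_t, m^0_t)U_t + \hE[\d{m}\U^0(t,\Xo_t, m^0_t)(\hX^0_t)\hU_t],
\]
with the limit in $\Ltwo_{\F}$, where $m^0_t = \Law(\Xo_t|\tF_t) = \Law(\Xo_t)$ is deterministic.

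First I would record the regularity of $\U^0$. Under \ref{lip_x}--\ref{third}, the result of Chassagneux et al.\ \cite{delarue2014classical} yields that $\U^0(t,\cdot,\cdot)$ is continuously differentiable in $(x,m)$, with $\d{x}\U^0$ and the lifted derivative $(z,m)\mapsto\d{m}\U^0(t,x,m)(z)$ bounded and Lipschitz, uniformly in $t$; this is exactly what legitimizes the Taylor expansion below and controls its remainder. Next I would split the increment of $\U^0$ into a spatial part, moving $x$ from $\Xo_t$ to $\Xe_t$ at the frozen measure $m^0_t$, and a measure part, moving $m$ from $m^0_t$ to $m^{\eps}_t \triangleq \Law(\Xe_t|\tF_t)$ at the frozen point $\Xe_t$. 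For the spatial part, continuity of $\d{x}\U^0$ together with the $\Ltwo$ convergence $(\Xe_t-\Xo_t)/\eps \to U_t$ from Theorem~\ref{diff} gives the limit $\d{x}\U^0(t,\Xo_t,m^0_t)U_t$. For the measure part, the key is the lifting picture of Appendix~\ref{derivative}: conditionally on $\tF_t$ (that is, for fixed $\tw$), $m^{\eps}_t$ is the $\Omega^0$-law of $\omega^0 \mapsto \Xe_t(\omega^0,\tw)$, a perturbation of $\Xo_t$ whose normalized increment $(\hXe_t-\hX^0_t)/\eps$ converges to $\hU_t$ in $\Ltwo$; the chain rule for the lifted Fr\'echet derivative then produces, in the limit, $\hE[\d{m}\U^0(t,\Xo_t,m^0_t)(\hX^0_t)\hU_t]$, where the point $\Xe_t$ inside the derivatives is replaced by $\Xo_t$ using their Lipschitz continuity and $\Xe_t\to\Xo_t$ in $\Ltwo$. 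Summing the two pieces gives \eqref{vt}.

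The main obstacle is the measure part: I must show the remainder of the lifted directional expansion is $o(\eps)$ in $\Ltwo$ while correctly bookkeeping the conditioning on $\tF_t$. Concretely, along the interpolation $\hX^{\eps,\theta}_t = \hX^0_t + \theta(\hXe_t - \hX^0_t)$ with conditional law $m^{\eps,\theta}_t$, I would write the increment as
\[
\U^0(t,\Xe_t,m^{\eps}_t)-\U^0(t,\Xe_t,m^0_t) = \hE\Big[\int_0^1 \d{m}\U^0\big(t,\Xe_t,m^{\eps,\theta}_t\big)\big(\hX^{\eps,\theta}_t\big)\,(\hXe_t - \hX^0_t)\,d\theta\Big],
\]
divide by $\eps$, and pass to the limit. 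The Lipschitz bound on $(z,m)\mapsto\d{m}\U^0(t,x,m)(z)$ from \ref{third}, the $\Ltwo$ convergences $(\hXe_t-\hX^0_t)/\eps \to \hU_t$ and $\hXe_t\to\hX^0_t$ from Theorem~\ref{diff}, and dominated convergence let me freeze the integrand at $\theta=0$, $(\Xo_t,m^0_t)$, and extract the stated limit. The conditioning is handled exactly as in \eqref{expand_integral}: $\hE$ integrates over the copied individual space $\hat\Omega^0$ with $\tw$ held fixed, so each identity holds $\tP$-almost surely and hence in $\Ltwo_{\F}$, completing the proof.
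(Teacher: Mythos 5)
Your proposal is correct and follows essentially the same route as the paper: the paper's own proof consists precisely of invoking the decomposition \eqref{ue_decompose} together with Proposition \ref{meanzero_smp} to reduce to the single limit of $\U^0$ increments, citing Theorem 5.5 of \cite{delarue2014classical} for the existence and boundedness of $\d{x}\Uo$ and $\d{m}\Uo$, and concluding from the $\Ltwo$ convergence in Theorem \ref{diff}. What you have done is spell out the first-order expansion (the split into spatial and measure parts, the interpolation, and the remainder control) that the paper leaves implicit in the phrase ``the result then follows.''
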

\begin{proof} From \eqref{ue_decompose} and Proposition \ref{meanzero_smp}, we have
$$ V_t = \lim_{\eps \to 0}\frac{\U^0(t,\Xe_t,\Law(\Xe_t| \tF_t))-\U^0(t,X^0_t,\Law(X^0_t))}{\eps} $$
From Theorem 5.5 in \cite{delarue2014classical}, we have that $\d{x}\Uo, \d{m}\Uo$ exist, and they are bounded by Theorem \ref{ue_existence}. The result then follows from Theorem \ref{diff} above.
\end{proof}

From Theorem \ref{decouple_uv} above, we have decoupled the FBSDE \rref{var} and get the following forward mean-field SDE
\begin{equation}\label{dU}
 dU_t = - \l[ \d{x}\U^0(t,X_t^0, m^0_t)U_t + \hE[\d{m}\U^0(t,X_t^0, m^0_t)(\hX^0_t)\hU_t] \r] dt + d\tW_t, \quad U_0 = 0 
 \end{equation}


Proposition \ref{meanzero_smp} has a simple yet interesting implication. It says that to approximate the $\eps$-MFG solution at the first order, we simply need to use the $0$-MFG solution applying along the trajectory $(t,\Xe_t,\Law(\Xe_t|\tF_t))$, i.e.
$$ \alpha^\eps_t = -\Ye_t = -\Ue(t,\Xe_t,\Law(\Xe_t|\tF_t)) \approx -\U^0(t,\Xe_t,\Law(\Xe_t|\tF_t))$$
However, we do not usually know $\U^0(t,x,m)$ for all $(t,x,m)$ but only $\U^0(t,x,m^0_t)$ where $m^0_t = \Law(X^0_t)$ corresponds to the $0$-MFG solution. The full information of $\U^0$ at every point $(t,x,m)$ will require solving the master equation \rref{master_ue} which is infinite-dimensional problem and is non-trivial to do so. On the other hands, $\U^0(t,x,m^0_t)$ is simply a gradient of a solution of the forward-backward PDE \eqref{uomo} of Lasry and Lions. So unless we know the function $\U^0(t,x,m)$, this process means that to get our optimal control at every time $t$, we need to resolve $0$-MFG problem over $[t,T]$ with initial $m_t = \Law(X^\eps_t| \tF_t)$ which is computationally expensive. As a result, we need to approximate $\U^0$ at the current state $(t,\Xe_t,\Law(\Xe_t|\tF_t))$ by $\U^0(t,X^0_t,m^0_t)$. In fact, it is not necessary to approximate at $(t,X^0_t,m^0_t)$ if we can observe $X^\eps_t$. In other words, making use of \rref{ueuo}, we can expand around $(t,\Xe_t,m^0_t)$ instead and get a slightly simpler approximation of $\ha^\eps_t$ as follows;
\begin{equation}\label{vt2}
\begin{aligned}
  \ha^\eps_t = -\Ye_t &=  -\Ue(t,\Xe_t,\Law(\Xe_t|\tF_t)) \\
  					&= -\U^0(t,\Xe_t,\Law(\Xe_t|\tF_t)) + o(\eps)  \\
					&= -\U^0(t,\Xe_t,m^0_t) + \eps \hE[\d{m}\U^0(t,\Xe_t, m^0_t)(\hX^0_t)\hU_t] + o(\eps)
 \end{aligned}
 \end{equation}
From both \rref{vt} and \rref{vt2}, we see that the derivative with respect to the $m$-argument of $\U^0(t,x,m^0_t)$ along the direction $\hU_t$ is essential in our asymptotic analysis.




Having established the first order approximation of the $\eps$-MFG solution in the form of a solution to a linear variational FBSDE, we now proceed to analyze properties of the solution $(U_t,V_t)_{0 \leq t \leq T}$. While the FBSDE \rref{var} describing them seems complicated as it involves both the individual noise and common noise, this is simply a nature of SMP approach as it describes the control in the open-loop form (a function of path) instead of the closed-loop feedback form (a function of state). However, if we only analyze the effect of the perturbation by the common noise, or equivalently if we look at the distribution of $(U_t(\omega^0,\cdot),V_t(\omega^0,\cdot))_{0 \leq t \leq T}$ for a fixed $\omega^0 \in \Omega^0$, then they are simply a pair of centered Gaussian process. 

\begin{thm}\label{meanzero} Let $(U_t,V_t,Q_t,\tQ_t)_{0 \leq t \leq T}$ denote the solution to the FBSDE \rref{var}, then $(U_t(\omega^0,\cdot), V_t(\omega^0,\cdot))_{0 \leq t \leq T}$ is a pair of Gaussian process in $(\t{\Omega},\{\tF_t\}_{0 \leq t \leq T}, \tP)$ with mean zero $\mb{P}^0$-a.s.
\end{thm}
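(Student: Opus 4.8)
The plan is to work not with the FBSDE \eqref{var} directly but with its decoupled forward form \eqref{dU}, provided by Theorem \ref{decouple_uv}. First I would observe that, because the $0$-MFG carries no common noise, the forward process $X^0$ and hence the coefficients $\d{x}\U^0(t,X^0_t,m^0_t)$ and $\d{m}\U^0(t,X^0_t,m^0_t)(\cdot)$ are $\F^0$-measurable; in particular they do not depend on $\tw$, and by Theorem \ref{ue_existence} and Theorem \ref{decouple_uv} they are bounded. Fixing $\omega^0$, equation \eqref{dU} then becomes a \emph{linear} equation for $t\mapsto U_t(\omega^0,\cdot)$ on $(\t\Omega,\{\tF_t\}_{0\le t\le T},\tP)$ in which the only $\tw$-randomness enters through the additive driver $d\tW_t$ and through the mean-field term, which is itself an ($\omega^0$-measurable) linear functional of the whole family $\{U_s(\hw,\cdot)\}_{\hw\in\Omega^0}$.

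For Gaussianity I would set up a Picard iteration: put $U^{(0)}\equiv 0$ and
$$U^{(n+1)}_t(\omega^0,\tw) = \tW_t(\tw) - \int_0^t \l[ \d{x}\U^0(s,X^0_s,m^0_s)U^{(n)}_s + \hE\big[\d{m}\U^0(s,X^0_s,m^0_s)(\hX^0_s)\hU^{(n)}_s\big] \r] ds.$$
By induction the whole field $\{U^{(n)}_t(\omega^0,\cdot) : t\in[0,T],\ \omega^0\in\Omega^0\}$ is \emph{jointly} Gaussian as a functional of the path of $\tW$, since each iteration applies only operations that preserve joint Gaussianity: addition of $\tW_t$, Lebesgue integration in time, multiplication by the $\tw$-deterministic coefficient $\d{x}\U^0$, and the mean-field averaging $\hw\mapsto\int(\cdots)U^{(n)}_s(\hw,\cdot)\,d\mb{P}^0(\hw)$, the last being an $\Ltwo$-limit of finite linear combinations of the Gaussian family and hence Gaussian. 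Using the boundedness of the coefficients, a Gronwall estimate gives $U^{(n)}\to U$ in $\Htwo$, and an $\Ltwo$-limit of centered Gaussian vectors is Gaussian; this yields that $(U_t(\omega^0,\cdot))_{0\le t\le T}$ is a Gaussian process for $\mb{P}^0$-a.e. $\omega^0$. Gaussianity of $V$ follows at once from the linear relation \eqref{vt}, as $V_t$ is a linear image of $U_t$ together with a mean-field average of $U_t$.

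For the mean-zero claim I would take the expectation $\tE[\cdot]$ over $\tw$ in \eqref{dU}. Writing $\mu_t(\omega^0)=\tE[U_t(\omega^0,\cdot)]$ and using $\tE[\tW_t]=0$ (commuting $\tE$ and $\hE$ by Fubini), the function $\mu$ solves the homogeneous linear mean-field ODE
$$\mu_t(\omega^0) = -\int_0^t \l[ \d{x}\U^0(s,X^0_s,m^0_s)\mu_s(\omega^0) + \int_{\Omega^0}\d{m}\U^0(s,X^0_s,m^0_s)(X^0_s(\hw))\mu_s(\hw)\,d\mb{P}^0(\hw) \r] ds,\quad \mu_0=0.$$
Taking absolute values, squaring, integrating in $\omega^0$ and applying Gronwall (again via the boundedness of $\d{x}\U^0,\d{m}\U^0$) gives $\E^0[\mu_t^2]=0$ for all $t$, hence $\mu_t=0$ for $\mb{P}^0$-a.e. $\omega^0$; thus $U_t$ and, by \eqref{vt}, $V_t$ are centered. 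Equivalently, one can package \eqref{dU} as a linear SDE $dU_t=-\mathcal{A}_tU_t\,dt+\mathbf{1}\,d\tW_t$ on the Hilbert space $\Ltwo(\Omega^0,\F^0,\mb{P}^0)$ with bounded generator $\mathcal{A}_t$, and represent $U_t=\int_0^t\Phi_{t,s}\mathbf{1}\,d\tW_s$ through the deterministic propagator $\Phi_{t,s}$; this Wiener integral of a deterministic integrand is automatically centered Gaussian, giving both claims simultaneously.

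The main obstacle will be handling the mean-field coupling rigorously: one must justify that the $\hw$-average of a jointly Gaussian field indexed by $(\omega^0,t)$ is again Gaussian (a measurability and stochastic-Fubini point, settled by $\Ltwo$-approximation), and that it is \emph{joint} Gaussianity, not merely marginal Gaussianity at each $\omega^0$, that is propagated by the iteration, since the drift at a given $\omega^0$ depends on $U$ at all other $\hw$. The remaining steps, namely convergence of the Picard scheme and the Gronwall argument for $\mu$, are routine given the boundedness of $\d{x}\U^0$ and $\d{m}\U^0$.
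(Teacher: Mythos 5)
Your proposal is correct, and its closing ``equivalently'' remark is precisely the paper's proof: the paper views \eqref{dU} as a linear stochastic evolution equation on $\Ltwo_{\F^0}$ with bounded generator $A(t,\xi)=\d{x}\U^0(t,X^0_t,m^0_t)\xi+\hE[\d{m}\U^0(t,X^0_t,m^0_t)(\hX^0_t)\h{\xi}]$, invokes the variation-of-constants formula of Da Prato--Zabczyk to write $U_t=\int_0^t S(s,t)e_1\,d\tW_s$, and concludes that this Wiener integral of a deterministic operator-valued integrand is centered Gaussian, with $V$ inherited through the linear relation \eqref{vt}. What you add beyond the paper is a self-contained elementary route: the Picard iteration propagating \emph{joint} Gaussianity (correctly flagging that the mean-field average over $\hw$ must be treated as an $\Ltwo$-limit of finite linear combinations within the closed Gaussian space generated by $\tW$), and a separate Gronwall argument showing $\tE[U_t]\equiv 0$. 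This buys independence from the infinite-dimensional semigroup machinery at the cost of more bookkeeping; both routes rest on the same two inputs, namely the $\F^0$-measurability and boundedness of the coefficients in \eqref{dU} and the linearity of \eqref{vt}, and both share the paper's reliance on the decoupled form \eqref{dU} rather than working directly with the FBSDE \eqref{var}.
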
  


\begin{proof}
Recall that $\F^0$ denote the $\sigma$-algebra in the first component of the product sample space which is independent of the common noise filtraiton. Let $A: [0,T] \times \Ltwo_{\F^0} \to \Ltwo_{\F^0}$ denote the linear operator 
$$ A(t,\xi) = \d{x}\U^0(t,X_t^0, m^0_t)\xi + \hE[\d{m}\U^0(t,X_t^0, m^0_t)(\hX^0_t)\h{\xi}]  $$
Then we can view \eqref{dU} as a stochastic evolution equation in $\Ltwo_{\F^0}$
$$ dU_t = A(t,U_t)dt + d\tW^H_t, \quad U_0=0$$
where  $(\tW^H_t)_{0 \leq t \leq T}$ denote the natural lifting of the common noise $(\tW_t)_{0 \leq t \leq T}$ to a Gaussian process in $\Ltwo_\F$ along the constant direction. That is, if $e_1=1$ denote the constant random variable, then for any $\xi \in \Ltwo_{\F}$, 
$$ \la \tW^H_t, \xi \ra = \la e_1, \xi \ra \tW_t = \mb{E}[\xi] \tW_t $$
By the uniform Lipschitz property of $\Uo$ (see Theorem \ref{ue_existence}), we have that $\| \d{x}\U^0(t,X^0_t,m^0_t) \|_\infty$, $\hE[\d{m}\U^0(t,X^0_t,m^0_t)(\hX^0_t)^2]^{\frac{1}{2}}$ are bounded. Thus, $A$ is a bounded linear functional and hence induces a strongly continuous semigroup $(S(s,t))_{0 \leq s \leq t \leq T}:  \Ltwo_{\F^0} \to \Ltwo_{\F^0}$. Therefore, by the variational of constant formula (see Theorem 5.4 in \cite{da2014stochastic}) and $U_0=0$, we have 
$$ U_t = S(0,t)U_0 + \int_0^t S(s,t)e_1 d\tW_s =  \int_0^t S(s,t)e_1 d\tW_s $$
Thus, $U_t$ is a mean-zero Gaussian process with respect to the common noise and the result follows since $V_t$ is a linear function of $U_t$.
\end{proof}

\section{Connection to Dynamic Programming Principle}\label{dpp2}

In this section, we discuss a connection between the SMP approach and the DPP approach and present asymptotic results from the DPP approach. Please note that the results here are largely formal and are intended to give a connection to a more familiar DPP approach. 

\subsection{FBSPDE for $\eps$-MFG} We follow the same method used to derive the system of PDEs \rref{uomo} for the $0$-MFG. We will attempt to write the forward-backward equations where the forward one describes the evolution of the equilibrium distribution through the Fokker-Planck equation and the backward one describes the HJB equation of the value function.



Recall from equation \eqref{feedback optimal control} that the optimal control of $\eps$-MFG in feedback form is given by $-\Ue(s,x,m)$. We can then define the value function $\Ve: [0,T]\times \R \times \Ptwo \to \R$ as 
\begin{align*}
 \Ve(t,x,m) &=  \inf_{(\alpha_{s})_{t \leq s \leq T} \in \HtwotT}  \mathbb{E}\l[ \int_{t}^{T} \alpha_{s}^{2} ds + g(\Xe_{T},\me_{T}) \Big| \Xe_{t} = x, \me_{t} = m\r] \\
 	&= \mathbb{E}\l[ \int_{t}^{T} \Ue(s,X^\eps_s,m^\eps_s)^{2} ds + g(\Xe_{T},\me_{T}) \Big| \Xe_{t} = x, \me_{t} = m\r]
\end{align*}
where $\me_t = \Law(\Xe_t | \tF_t)$. The value function above represents the minimum expected cost from $t$ to $T$ given the state of the game at time $t$ (the current state of a player and the current distribution of all players). Suppose that $\Ve$ is sufficiently regular, then by Dynamic Programming Principle, it can be shown to satisfy

\begin{equation}\label{master}
\begin{split}  \d{t}\Ve(t,x,m)-\frac{(\d{x}\Ve(t,x,m))^{2}}{2}+\frac{\sigma^{2}+\eps^2}{2}\dd{xx}\Ve(t,x,m)- \hE\l[\d{m}\Ve(t,x,m)(\hX)(\d{x}\Ve(t,\hX,m)) \r] \\ \quad + \frac{\sigma^2}{2}\dd{mm}\Ve(t,x,m)(\hX)[\zeta,\zeta] + \frac{\eps^2}{2}\dd{mm}\Ve(t,x,m)(\hX)[1,1]  +\eps^2\hE\l[\dd{xm}\Ve(t,x,m)(\hX)1 \r]= 0
\end{split}
\end{equation}
with terminal condition
$$ \Ve(T,x,m) = g(x,m) $$
where $\hX$ is a lifting random variable, i.e. $\Law(\hX)=m$, and $\zeta$ is a $\mc{N}(0,1)$-random variable independent of $\hX$. The derivative with respect to the $m$-argument is based on the framework proposed by Lasry and Lions in \cite{cardaliaguet2010} as introduced in the previous section. We refer the reader to Appendix \ref{derivative} and reference therein for more detail.

The connection between the SMP and the HJB approach for a general stochastic control problem is well understood. That is, the backward process is the gradient of the value function, at least when the value function is sufficiently regular. A more general statement can be said in term of sub/super gradient and a viscosity solution (see Ch.5 in \cite{young1999} for instance). Similarly for $\eps$-MFG, we have
\begin{equation}\label{ueve}
\Ue(t,x,m) = \d{x}\Ve(t,x,m)
\end{equation}
This relation can be proved in a similar way by taking \ito's lemma on $\d{x}\Ve(t,\Xe_t,\Law(\Xe_t | \tF_t))$ and use \rref{master} to show that it satisfies \rref{smp_mfg}. See Theorem 6.4.7 in \cite{pham2009} for the proof when there is no argument $m$ and section 6 in \cite{carmona2014master} for a generalized \ito's lemma with a probability measure argument.

From \eqref{feedback optimal control}, \eqref{ueve}, we have that the $\eps$-MFG Nash equilibrium strategy is 
\begin{equation}\label{control}
 \ha^\eps_t = -\Ye_t = -\Ue(t,\Xe_t,\Law(\Xe_t|\tF_t)) = -\d{x}\Ve(t,\Xe_t,\Law(\Xe_t|\tF_t))
 \end{equation}
Let $\me_t = m^{\hae}_t$ denote the corresponding conditional law, then it satisfies the following stochastic Fokker-Plank equation 
\begin{equation}\label{FP1}
	 d\me(t,x) = \l(\d{x}(\d{x}\Ve(t,x,\me_t)\me_t) + \frac{\sg^{2}+\eps^{2}}{2}\dd{xx}\me_t\r)dt -\eps \d{x}\me_t\;d\tW_{s}
\end{equation}
Now we define a random value function along $(\me_t)_{0 \leq t \leq T}$ by letting
$$ \ue(t,x,\tw) = \Ve(t,x,\me_t(\tw)) $$
Using the master equation \rref{master}, it follows that 
\begin{align*}
 d\ue(t,x) &= \Bigg(  \d{t}\Ve(t,x,\me_t) + \frac{\sigma^2}{2}\dd{mm}\Ve(t,x,\me_t)(\hX)[\zeta,\zeta] + \hE\l[\d{m}\Ve(t,x,\me_t)(\hX)(\d{x}\Ve(t,\hX,\me_t)) \r]  \\
 &\qquad \qquad +\frac{\eps^2}{2}\dd{mm}\Ve(t,x,\me_t)(\hX)[1,1  \Bigg) dt - \eps \hE\l[\d{m}\Ve(t,x,\me_t)(\hX)1 \r] d\tW_{t} \\
 & = \l( \frac{(\d{x}\Ve(t,x,\me_t))^{2}}{2}-\frac{\sigma^{2}}{2}\dd{xx}\Ve(t,x,\me_t) - \frac{\eps^{2}}{2}\Big( \dd{xx}\Ve(t,x,\me_t)- 2 \hE\l[\dd{xm}\Ve(t,x,\me_t)(\hX)1 \r]\Big)\r)dt \\
 &\qquad\qquad - \eps  \hE\l[\d{m}\Ve(t,x,\me_t)(\hX)1 \r] d\tW_{t} \\
 & = \l( \frac{(\d{x}\ue(t,x))^{2}}{2}-\frac{\sigma^{2}}{2}\dd{xx}\ue(t,x) - \frac{\eps^{2}}{2}\Big( \dd{xx}\ue(t,x)- 2 \hE\l[\dd{xm}\Ve(t,x,\me_t)(\hX)1 \r]\Big)\r)dt  \\
 &\qquad\qquad- \eps  \hE\l[\dd{xm}\Ve(t,x,\me_t)(\hX)1 \r] d\tW_{t} \\
 &= \l( \frac{(\d{x}\ue(t,x))^{2}}{2}-\frac{\sigma^{2}}{2}\dd{xx}\ue(t,x) - \frac{\eps^{2}}{2}\Big( \dd{xx}\ue(t,x)- 2 \d{x}w(t,x)\Big)\r)dt - \eps  \ve(t,x) d\tW_{t}
 \end{align*}
where
$$ \ve(t,x) \triangleq  \hE\l[\d{m}\Ve(t,x,\me_t)(\hX)1 \r] $$
Combining with (\ref{control}) and (\ref{FP1}) , we have arrived at a system of FBSPDE
\begin{equation}\label{ueme}
\begin{aligned}
	d\me(t,x) &= \l(\d{x}(\d{x}\ue\me) + \frac{\sg^{2}+\eps^2}{2}\dd{xx}\me+ \frac{\eps^{2}}{2}\dd{xx}\me\r)dt -\eps  \d{x}\me\;d\tW_{t} \\
 	d\ue(t,x) & = \l( \frac{(\d{x}\ue)^{2}}{2}-\frac{\sigma^{2}}{2}\dd{xx}\ue - \frac{\eps^{2}}{2}\Big( \dd{xx}\ue- 2\d{x}\ve\Big)\r)dt - \eps \ve d\tW_{t}
\end{aligned}
\end{equation}
with boundary conditions
$$ \me(0,x) = m_0(x) = \Law(\xi_0) , \qquad \ue(T,x) = g(x,\me_T) $$

Similarly to the equation \eqref{master_ue}, we have a verification theorem for \rref{ueme} which states that if we have a sufficiently regular solution $(\ue,\me,\ve)$ to the FBSPDE \rref{ueme} above, then the $\eps$-MFG solution is given by
$$ \ha_t^\eps = -\d{x}\ue(t,\Xe_t), $$ 
We refer the reader to section 4.2 in \cite{bensoussan2014master} for such result. The tuple $(\ue,\me,\ve)$ then gives, respectively, the value function, distribution of the optimal state process, and the sensitivity of the valuation with respect to a spatial shift of the distribution process. Consequently, despite the fact that the derivation of \rref{ueme} above requires the regularity of a solution of the master equation, it actually contains the same information as the master equation. One represents the value function at time $t$ as a function of $(x,m)$ while the other represents the value function at time $t$ as a function of $(x,\tw)$ where $\tw$ is a common Brownian motion path. Note also that when $\eps=0$, as expected, we get back the system of PDEs \rref{uomo} of Lasry and Lions.

\subsection{Asymptotic analysis} We now consider the case when $\eps$ is small and the approximation of ($\ue,\me$) around ($\uo,\mo$). Particularly, we want to find a pair of random functions $(\dmo,\duo)$ from the following expansion
\begin{align*}
\me(t,x,\tw) &= \mo(t,x) + \eps \dmo(t,x,\tw) + o(\eps) \\
\ue(t,x,\tw) &= \uo(t,x) + \eps \duo(t,x,\tw) + o(\eps)
\end{align*}
Let us proceed formally. We write
$$\du(t,x) = \frac{\ue(t,x) - \uo(t,x)}{\eps}, \quad \dm(t,x) = \frac{\me(t,x) - \mo(t,x)}{\eps} $$
Then from the dynamic of ($\ue,\me$) and  ($\uo,\mo$) in \rref{ueme}, it follows that
\begin{align*}
	d\dm(t,x)	&= \l[ \frac{\sg^{2}}{2}\dd{xx}\dm+\frac{(\me \d{x}\ue - \mo\d{x}\uo )_{x}}{\eps} + O(\eps) \r] dt -  \d{x}\me d\tW_t \\
				&=  \l[ \frac{\sg^{2}}{2}\dd{xx}\dm+\d{x}(\dm\d{x}\uo+\mo\d{x}\du) + O(\eps) \r] dt -  \d{x}\me d\tW_t
\end{align*}
and 
\begin{align*}
	d\du(t,x) 	&=  \l[ \frac{(\d{x}\ue)^{2}-(\d{x}\uo)^{2}}{2\eps} - \frac{\sg^{2}}{2}\dd{xx}\du + O(\eps) \r] dt -  \hE\l[\d{m}\Ve(t,x,\me_t)(\hX)1 \r] d\tW_t \\
				&= \l[ \d{x}\du\d{x}\uo - \frac{\sg^{2}}{2}\dd{xx}\du + O(\eps) \r] dt -   \hE\l[\d{m}\Ve(t,x,\me_t)(\hX)1 \r] d\tW_t 
\end{align*}
with boundary conditions
$$ \dm(0,x) = m_0(x), \qquad \du(T,x) = \frac{g(x,\me_T)-g(x,\mo_T)}{\eps} $$
Formally taking the limit as $\eps \to 0$, we obtain the system of SPDEs describing the $\eps$-correction terms,
\be\label{fbspde1}
\begin{aligned}
	d\dmo(t,x)	&=  \l[ \frac{\sg^{2}}{2}\dd{xx}\dmo+\d{x}(\dmo\d{x}\uo+\mo\d{x}\duo) \r] dt -  \d{x}\mo d\tW_t  \\
	d\duo(t,x) 	&= \l[ \d{x}\duo\d{x}\uo - \frac{\sg^{2}}{2}\dd{xx}\duo  \r] dt -   \hE\l[\d{m}\Vo(t,x,\me_t)(\hX)1 \r] d\tW_t  
\end{aligned}
\ee
with boundary conditions given by
\be\label{boundary}
 \dmo(0,x) = 0,\qquad \duo(T,x) = \la \d{m}g(x,\mo_T)(\cdot),\dmo_T \ra
 \ee
where $\d{m}g$ denote the derivative with respect to the probability measure and $\la \cdot, \cdot \ra$ denote the inner product in $\Ltwo$. In this case, we are using a different notion of derivative, namely the G\^{a}uteax directional derivative, as it is more appropriate for this approach. We refrain from discussing in detail here and rather refer the reader to \cite{bensoussan2014master} for more detail.

Normally, in the BSPDE or BSDE setting, the diffusion part of the backward process is not specified and is part of a solution to ensure the adaptedness of a solution. In other words, the FBSPDE above should be written as
\be\label{fbspde2}
\begin{aligned}
	d\dmo(t,x)	&=  \l[ \frac{\sg^{2}}{2}\dd{xx}\dmo+\d{x}(\dmo\d{x}\uo+\mo\d{x}\duo) \r] dt -  \d{x}\mo d\tW_t  \\
	d\duo(t,x) 	&= \l[ \d{x}\duo\d{x}\uo - \frac{\sg^{2}}{2}\d{x}\duo  \r] dt - \dv d\tW_t  
\end{aligned}
\ee
and its solution would be a tuple of adapted-process $(\dmo,\duo,\dv)$. The FBSPDE \rref{fbspde2} is essentially the first-order correction term of $\eps$-MFG solution from the DPP approach. 

To see the connection between the two approaches, we express the stochastic function $(\duo,\dmo)$ in terms of $(U_t,V_t)$ and the derivatives of $\Uo$, the main objects from the SMP approach. The relation between $\duo,\dv$ and those from the SMP are clear from the relation $\d{x}\Ve(t,x,m) = \Ue(t,x,m)$. That is,
$$ \d{x}\duo(t,x) =  \lim_{\eps \to 0} \frac{\d{x}\Ve(t,x,\me_t)-\d{x}\Vo(t,x,\mo_t)}{\eps}  =  \hE\l[\d{m}\Uo(t,x,\mo_t)(\hX)(\hU_t) \r] $$
The relation between $\dmo$ and those from the SMP is less straightforward mainly due to a difference in the notion of derivatives with respect to the law. For a test function $\phi$, we write
\begin{align*}
 \la \phi, \dmo(t,\cdot) \ra &= \lim_{\eps\to0} \frac{\la \phi, \me_t \ra - \la \phi, \mo_t \ra }{\eps} \\
 &=  \lim_{\eps\to 0} \frac{ \mb{E}[\phi(\Xe_t | \tF_t)] - \mb{E}[\phi(\Xo_t)] }{\eps} \\
 &= \mb{E}[ \d{x}\phi(\Xo_t)U_t | \tF_t]  \\
  &= \int \d{x}\phi(x)u m^{\Xo,U}_t(x,u) dxdu \\
  &= \la \d{x}\phi, \int_\R u m^{\Xo,U}_t(\cdot,u) du \ra \\
 &= \la \phi, - \d{x}\int_\R um^{\Xo,U}_t(\cdot,u) du \ra
 \end{align*} 
 where $m_t^{\Xo,U}$ denote the joint law of $\Xo_t,U_t$ conditional on $\tF_t$. That is, 
 $$ \dmo(t,x) =  - \d{x}\int_\R um^{\Xo,U}_t(x,u) du. $$

\appendix


\section{FBSDE with monotone functionals}\label{fbsde_monotone_functionals}

 We state here an existence and uniqueness result for an FBSDE with monotone functionals. The result in this section is mainly from \cite{ahuja2016forward} with simpler setting. We consider an FBSDE of the form
 \begin{equation}
 \label{fbsde_G}
 \begin{gathered}
 	 dX_t = - Y_tdt + \sigma dW_t + \eps d\tW_t \\
	 dY_t =  Z_tdW_t + \tZ_td\tW_t \\
X_s = \xi, \quad Y_T = G(X_T)
\end{gathered}
\end{equation}
where $G : \Ltwo_{\F} \to \Ltwo_{\F}$ is a ``functional". This type of equation covers most, if not all, of the FBSDEs encountered in this paper. For instance, given $m \in \CP$, then 
$$ G(X) = \d{x}g(X,m_T)$$
corresponds to the FBSDE \rref{smp_fbsde} arising from an individual control problem for a given $m$. Another case and perhaps a more important one is when
$$ G(X) = \d{x}g(X,\Law(X|\tF_T)) $$
as it corresponds to the mean-field FBSDE \rref{smp_mfg} arising from the $\eps$-MFG. The first theorem concerns the wellposedness of FBSDE \rref{fbsde_G} when $G$ is Lipschitz and monotone under conditional expectation.

\begin{thm}\label{wellposed} Let $\xi \in \Ltwo_{\F}$ and $G: \Ltwo_{\F}\to  \Ltwo_{\F}$ be a functional satisfying the following Lipschitz and monotonicity conditions; there exist a constant $K$ such that for any $\xi_1,\xi_2 \in \Ltwo_{\F}, A_T \in \tF_T$,
	\begin{gather}
		 \mb{E}[\Ind_{A_T}(G(\xi_1)-G(\xi_2))^2] \leq K^2\mb{E}[\Ind_{A_T}(\xi_1-\xi_2)^2] \label{lipG} \\
		\mb{E}[\Ind_{A_T}(G(\xi_1)-G(\xi_2))(\xi_1-\xi_2)] \geq 0 \label{monG}
	\end{gather}
Then there exist a unique adapted solution $(X_t,Y_t,Z_t,\tZ_t)_{s \leq t \leq T}$ to FBSDE \rref{fbsde_G}
satisfying the estimate: for any $A \in \F_s$
\begin{equation}\label{estimate}
	\mb{E}\l[ \Ind_A  \l( \sup_{s \leq t \leq T} X_t^2 + \sup_{s \leq t \leq T} Y_t^2  + \int_s^T [Z_t^2 + \tZ_t^2] dt \r) \r] \leq C_K\l( \mb{E}[\Ind_A(\xi^2 + G(0)^2 )] + (\sigma^2+\eps^2)T \r) 
\end{equation}
where $C_K$ is a constant depends only on $K$.
\end{thm}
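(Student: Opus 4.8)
The plan is to establish uniqueness together with the a priori estimate first, and then obtain existence by the method of continuation in the parameter multiplying the coupling. The structural feature I would exploit throughout is that the forward diffusion coefficients $\sigma,\eps$ are constants, so for two solutions driven by the same Brownian motions the difference $\Delta X = X^1 - X^2$ has no martingale part: its dynamics reduce to $d\Delta X_t = -\Delta Y_t\,dt$, while $d\Delta Y_t = \Delta Z_t\,dW_t + \Delta\tZ_t\,d\tW_t$.

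For uniqueness I would apply It\=o's formula to the product $\Delta X_t \Delta Y_t$. Since $\Delta X$ has finite variation the cross-variation term vanishes, and one obtains $d(\Delta X_t \Delta Y_t) = -(\Delta Y_t)^2\,dt + \Delta X_t(\Delta Z_t\,dW_t + \Delta\tZ_t\,d\tW_t)$. Integrating over $[s,T]$, using $\Delta X_s = 0$, and taking expectation (the martingale term having zero mean by the integrability built into the solution spaces) yields
\[
\E\big[(G(X_T^1)-G(X_T^2))(X_T^1 - X_T^2)\big] = -\,\E\int_s^T (\Delta Y_t)^2\,dt.
\]
The left-hand side is nonnegative by the monotonicity \rref{monG} (taken with $A_T=\Omega$), while the right-hand side is nonpositive, so both vanish; hence $\Delta Y\equiv 0$ and then $\Delta X\equiv 0$ from its dynamics. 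Carrying the indicator $\Ind_{A_T}$ through this computation is precisely what will allow the argument to run conditionally in the later mean-field applications.

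For the a priori estimate, assuming a solution exists, I would combine a forward $\Ltwo$-bound for $X$ (controlling $\sup_t X_t^2$ in terms of $\int_s^T Y_t^2\,dt$ and the quadratic variation contribution $(\sigma^2+\eps^2)T$, via the Burkholder--Davis--Gundy inequality and Gronwall) with a backward bound for $Y$ (controlling $\sup_t Y_t^2 + \int_s^T(Z_t^2+\tZ_t^2)\,dt$ in terms of the terminal datum $G(X_T)$). The monotonicity closes the loop: the same product computation above bounds $\int_s^T(\Delta Y_t)^2\,dt$, and comparing the solution against the trivial baseline while invoking the Lipschitz bound \rref{lipG} produces the term $\E[\Ind_A(\xi^2 + G(0)^2)]$; retaining $\Ind_A$ for $A\in\F_s$ throughout gives the localized estimate \rref{estimate}. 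For existence I would embed \rref{fbsde_G} into the family, for $\lambda\in[0,1]$,
\[
\begin{cases}
dX_t = (-\lambda Y_t + I_t)\,dt + \sigma\,dW_t + \eps\,d\tW_t,\\
dY_t = J_t\,dt + Z_t\,dW_t + \tZ_t\,d\tW_t,\\
X_s = \xi,\quad Y_T = \lambda G(X_T) + (1-\lambda)X_T + \zeta,
\end{cases}
\]
with arbitrary inputs $I,J\in\H^2$ and $\zeta\in\Ltwo_{\F_T}$. At $\lambda=0$ the system is decoupled and solvable (solve the forward SDE, then the resulting BSDE with terminal $X_T+\zeta$), and the terminal map $\xi\mapsto\lambda G(\xi)+(1-\lambda)\xi$ is Lipschitz and monotone uniformly in $\lambda$. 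The induction step is a continuation lemma asserting the existence of $\delta_0>0$, depending only on $K$ and $T$, such that solvability for a given $\lambda$ implies solvability for $\lambda+\delta$ whenever $\delta\le\delta_0$; this is proved by a fixed-point map on $\H^2\times\H^2$ in which the extra $\delta$-coupling is absorbed into the inputs and the monotone a priori estimate furnishes a contraction with constant controlled by $\delta$. Iterating from $\lambda=0$ reaches $\lambda=1$ with $I=J=\zeta=0$.

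The main obstacle I expect is the continuation lemma, specifically verifying that the step size $\delta_0$ can be chosen uniformly in $\lambda$. This is exactly where the monotonicity \rref{monG} is indispensable: without it the cross term arising when estimating the difference of two candidate solutions would lack a favorable sign, and the contraction constant could degrade as $\lambda\to 1$, breaking the induction. A secondary point requiring care is keeping the indicator-localized estimates compatible with the contraction, so that the final bound \rref{estimate} genuinely retains the $\Ind_A$ conditioning needed downstream.
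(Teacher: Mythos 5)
Your proposal is correct and takes essentially the same route as the paper, which does not prove Theorem \ref{wellposed} in-house but explicitly defers to the method of \cite{peng1999} as carried out for monotone functionals in \cite{ahuja2016forward}: namely the duality computation on $\Delta X_t\,\Delta Y_t$ (exploiting that $\Delta X$ has no martingale part) to get uniqueness and the a priori bound, followed by the Peng--Wu continuation in the coupling parameter with a step size $\delta_0$ depending only on $K,T$. The one point worth keeping in view, which you already flag, is that the Lipschitz/monotonicity hypotheses are localized on $\tF_T$-sets while the final estimate \rref{estimate} is localized on $\F_s$-sets, so the bookkeeping of indicators in the contraction step must be done as in the cited reference.
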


The second result gives the estimate of the solution.

\begin{thm}\label{main_estimate} Under the same assumption as Theorem \ref{wellposed}, let $\xi_1,\xi_2 \in \Ltwo_{\F_s}$ and $(X^i_t,Y^i_t,Z^i_t,\tZ^i_t)_{s \leq t \leq T}$, $i=1,2$ denote the solution of FBSDE \rref{fbsde_G} with initial $\xi_i$, then for any $\tF_s$-measurable set $A$,
\begin{equation}\label{estimate_diff}
	\mb{E}\l[ \sup_{s \leq t \leq T} \Ind_A \Delta X_t^2 + \sup_{s \leq t \leq T} \Ind_A \Delta Y_t^2  +  \int_s^T [\Ind_A\Delta Z_t^2 + \Ind_A\Delta \tZ_t^2] dt \r] \leq C_{K,T} \mb{E}[\Ind_A \Delta\xi^2]
\end{equation}
where $\Delta X_t = X^1_t - X^2_t$ and $\Delta Y_t, \Delta Z_t, \Delta \tZ_t, \Delta \xi$ are defined similarly, and $C_{K,T}$ is a constant depending only on $K,T$. 
\end{thm}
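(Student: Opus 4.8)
The plan is to run the monotonicity argument on the difference of the two solutions. Writing $\Delta X_t = X^1_t - X^2_t$ and likewise for $\Delta Y,\Delta Z,\Delta\tZ$, and $\Delta\xi = \xi_1-\xi_2$, subtracting the two copies of \rref{fbsde_G} gives the linear system
\begin{gather*}
 d\Delta X_t = -\Delta Y_t\,dt, \qquad d\Delta Y_t = \Delta Z_t\,dW_t + \Delta\tZ_t\,d\tW_t, \\
 \Delta X_s = \Delta\xi, \qquad \Delta Y_T = G(X^1_T) - G(X^2_T).
\end{gather*}
Two structural features drive the argument. First, the coefficients $\sigma$ and $\eps$ are the same for both solutions, so they cancel and $\Delta X$ is a finite-variation process with no martingale part. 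Second, the backward equation carries no driver, so $\Delta Y$ is a square-integrable martingale on $[s,T]$ (its $\H^2$-integrability coming from Theorem \ref{wellposed}) and $\Delta Y_t = \mb{E}[\,\Delta Y_T \mid \F_t\,]$.

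Next I would establish the core monotonicity estimate. Fix $A \in \tF_s \subseteq \tF_T$, so $\Ind_A$ is $\F_s$-measurable and constant in time on $[s,T]$. Applying \ito's formula to $\Delta X_t\,\Delta Y_t$ (the cross-variation vanishes since $\Delta X$ has no martingale part), multiplying by $\Ind_A$, integrating over $[s,T]$ and taking expectations, the two resulting stochastic integrals have mean zero (justified by localization together with the a priori bounds of Theorem \ref{wellposed}), yielding
$$ \mb{E}\l[\Ind_A\,(X^1_T-X^2_T)(G(X^1_T)-G(X^2_T))\r] - \mb{E}[\Ind_A\,\Delta\xi\,\Delta Y_s] = -\,\mb{E}\l[\Ind_A \int_s^T \Delta Y_t^2\,dt\r]. $$
The monotonicity hypothesis \rref{monG}, applied with $A_T=A$, makes the first term nonnegative, so with $I := \mb{E}[\Ind_A\int_s^T \Delta Y_t^2\,dt]$ and $J := \mb{E}[\Ind_A\,\Delta\xi^2]$ we get $I \leq \mb{E}[\Ind_A\,\Delta\xi\,\Delta Y_s]$.

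The main work is to close this into an a priori bound, and this is the step I expect to be the crux. Since $\Delta Y$ is a martingale, $\Delta Y_s = \mb{E}[\Delta Y_T \mid \F_s]$, so Jensen's inequality and the Lipschitz hypothesis \rref{lipG} give $\mb{E}[\Ind_A\,\Delta Y_s^2] \leq \mb{E}[\Ind_A\,\Delta Y_T^2] \leq K^2\,\mb{E}[\Ind_A\,\Delta X_T^2]$, while the forward representation $\Delta X_t = \Delta\xi - \int_s^t \Delta Y_r\,dr$ and Cauchy--Schwarz give $\mb{E}[\Ind_A\,\Delta X_T^2] \leq 2J + 2T\,I$. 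Applying Young's inequality $\mb{E}[\Ind_A\,\Delta\xi\,\Delta Y_s] \leq \tfrac{1}{2\lambda}\mb{E}[\Ind_A\,\Delta\xi^2] + \tfrac{\lambda}{2}\mb{E}[\Ind_A\,\Delta Y_s^2]$ and inserting the two previous bounds produces the self-referential inequality $I \leq (\tfrac{1}{2\lambda}+\lambda K^2)J + \lambda K^2 T\,I$. The delicate point is precisely that $\Delta Y_s$ re-enters through the Lipschitz and forward estimates; the remedy is to tune $\lambda = 1/(2K^2T)$, which makes the coefficient of $I$ on the right equal $\tfrac12$ so that it can be absorbed on the left, giving $I \leq C_{K,T}\,J$. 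This is where the dependence of the constant on $T$ originates.

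With $I$ controlled, the remaining terms follow by bootstrapping. The forward representation gives $\mb{E}[\Ind_A \sup_{s\le t\le T}\Delta X_t^2] \leq 2J + 2T\,I \leq C_{K,T}\,J$; Doob's maximal inequality applied to the martingale $\Ind_A\Delta Y_t$ gives $\mb{E}[\Ind_A \sup_{s\le t\le T}\Delta Y_t^2] \leq 4\,\mb{E}[\Ind_A\Delta Y_T^2] \leq 4K^2\mb{E}[\Ind_A\Delta X_T^2] \leq C_{K,T}\,J$; and the \ito\ isometry for $\Delta Y$ gives $\mb{E}[\Ind_A\int_s^T(\Delta Z_t^2 + \Delta\tZ_t^2)\,dt] = \mb{E}[\Ind_A\Delta Y_T^2] - \mb{E}[\Ind_A\Delta Y_s^2] \leq C_{K,T}\,J$. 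Summing these three bounds yields \rref{estimate_diff}.
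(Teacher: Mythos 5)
Your argument is correct and is exactly the monotonicity (Peng--Wu-type duality) argument the paper invokes: the paper itself does not print a proof but defers to Theorem~3 of \cite{ahuja2016forward}, describing the method as ``using the monotonicity condition in a similar way as done in a classical FBSDE \cite{peng1999},'' which is precisely your computation of It\^o on $\Delta X_t\Delta Y_t$, killing the terminal term via \rref{monG}, and closing the estimate with \rref{lipG}, Young's inequality, Doob, and the It\^o isometry. The only cosmetic caveat is the degenerate case $K=0$ in your choice $\lambda = 1/(2K^2T)$, which is trivially avoided by taking $\lambda = 1/(2(K^2+1)T)$.
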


The proofs of Theorem \ref{wellposed} and \ref{main_estimate} make use of the monotonicity condition in a similar way as done in a classical FBSDE \cite{peng1999}. The proofs can be found in \cite{ahuja2016forward}  under more general functionals (see Theorem 3).

\section{Proof of Theorem \ref{diff}}\label{proof_convergence}
\begin{proof}
Let $\Delta X^{\eps}_t = \frac{X^\eps_t-X^0_t}{\eps}$ and $\delta^{X,\eps}_{t} =  \Delta X^{\eps}_{t} - U_{t}$ and define similarly $\Delta Y^{\eps}_t, \Delta Z^{\eps}_t, \Delta \tZ^{\eps}_t, \delta^{Y,\eps}_{t}, \delta^{Z,\eps}_t, \delta^{\tZ,\eps}_t$, then $(\delta^{X,\eps}_{t},\delta^{Y,\eps}_{t},\delta^{Z,\eps}_t, \delta^{\tZ,\eps}_t)_{0 \leq t \leq T}$ satisfies
\begin{equation}
\begin{gathered} 
	d \delta^{X,\eps}_{t} =  - \delta^{Y,\eps}_{t}dt, \\
	d \delta^{Y,\eps}_{t} = \delta^{Z,\eps}_tdW_t + \delta^{\tZ,\eps}_t d\tW_t, \\
	 \delta^{X,\eps}_{0} = 0, \quad \delta^{Y,\eps}_{T} = \frac{\d{x}g(X^{\eps}_{T},\Law(X^{\eps}_{T}|\tF_T)) -\d{x}g(X^{0}_{T},m^0_T)}{\eps} - \dd{xx}g(X^{0}_{T},m^0_T)U_{T} -\h{\mb{E}} \l[ \dd{xm}g(X^{0}_{T},m^{0}_{T})(\hX^0_T)\hU_{T} \r]
\end{gathered}
\end{equation}
Let
$$ X^{\ld,\eps}_t := X^{0}_t + \ld (X^{\eps}_t - X^{0}_t), \quad 0 \leq \ld \leq 1 $$
Note that
\begin{align*} 
	 & \frac{\d{x}g(X^{\eps}_{T},\Law(X^{\eps}_{T}|\tF_T)) -\d{x}g(X^{0}_{T},\Laww{X^{0}_{T}})}{\eps} - \dd{xx}g(X^{0}_{T},\Laww{X^{0}_{T}})U_{T} -\h{\mb{E}} \l[ \dd{xm}g(X^{0}_{T},m^{0}_{T})(\hX^0_T)\hU_{T} \r]\\
 &=  \int_{0}^{1} \l( \dd{xx}g(X^{\ld,\eps}_{T},\Law(X^{\ld,\eps}_{T}|\tF_T))\Delta X^{\eps}_{t} + \h{\mb{E}}\l[ \dd{xm}g(X^{\ld,\eps}_{T},\Law(X^{\ld,\eps}_{T}|\tF_T))(\hX^{\ld,\eps}_T)\Delta \hX^{\eps}_{t} \r]  \r) d\ld \\
 &\quad - \dd{xx}g(X^{0}_{T},m^0_T)U_{T} -\h{\mb{E}} \l[ \dd{xm}g(X^{0}_{T},m^{0}_{T})(\hX^0_T)\hU_{T} \r] \\
 &= \l[ \int_{0}^{1} \dd{xx}g(X^{\ld,\eps}_{T},\Law(X^{\ld,\eps}_{T}|\tF_T))d\ld \r] \delta^{X,\eps}_{T} + \int_{0}^{1} \h{\mb{E}}\l[ \dd{xm}g(X^{\ld,\eps}_{T},\Law(X^{\ld,\eps}_{T}|\tF_T))(\hX^{\ld,\eps}_T)\h{\delta}^{X,\eps}_{T} \r] d\ld \\
 &\quad + \l[  \int_{0}^{1} \dd{xx}g(X^{\ld,\eps}_{T},\Law(X^{\ld,\eps}_{T}|\tF_t))d\ld - \dd{xx}g(X^{0}_{T},\Law(X^{0}_{T})) \r] U_{T} \\
 &\quad +  \int_{0}^{1} \h{\mb{E}} \l[ \l( \dd{xm}g(X^{\ld,\eps}_{T},\Law(X^{\ld,\eps}_{T}|\tF_T))(\hX^{\ld,\eps}_{T}) -\dd{xm}g(X^{0}_{T},\Law(X^{0}_{T}))(\hX^0_T) \r) \hU_{T} \r] d\ld  \\
 & =: I^{\eps}_{1}( \delta^{X,\eps}_{T}) + I^{\eps}_{2},
 \end{align*}
where $I^{\eps}_1: \Ltwo_{\F_T} \to \Ltwo_{\F_T}$ is a linear functional and $I^{\eps}_2 \in \Ltwo_{\F_T}$. Because $\dd{xx}g, \dd{xm}g$ are bounded and $U_T,\hU_T$ are bounded in $\L^2$, it follows from estimate (\ref{estimate}) that 
$$ \mb{E}[\sup_{0 \leq t \leq T} (\delta^{X,\eps}_{t})^{2} ] \leq C_{K,T}, $$
where $C_{K,T}$ is a constant independent of $\eps$. As a result, we get
$$\mb{E}[ \sup_{\substack{0 \leq t \leq T \\ 0 \leq \ld \leq 1}} (X^{\ld,\eps}_{t}-X^{0}_{t})^{2} ] \leq C_{K,T}\eps^2. $$
Thus, there exist a constant $\tC_{K,T}$ depending only on $K,T$ such that
$$ \mb{E}[(I^{\eps}_{2})^2] \leq \tC_{K,T}\eps^2. $$ 
Then by the estimate (\ref{estimate}) again, we get \rref{converge_xy} as desired.
\end{proof}

\section{Proof of Theorem \ref{approx_nash}}\label{proof_approx_nash}
\begin{proof} As we are interested in the asymptotic limit as $\eps \to 0$, we assume without loss of generality that $|\eps| < 1$. Let $\hae = (\hae_t)_{0 \leq t \leq T}$ denote the $\eps$-MFG solution, $\beta^\eps = (\beta^\eps_t)_{0 \leq t \leq T}$ be the approximate strategy defined by
$$ \beta^\eps_t = \ha^0_t -\eps V_t $$
where $V=(V_t)_{0 \leq t \leq T}$ is the backward process of the linear variational FBSDE \rref{var}. For notational convenience, we will write $\Je(\alpha | \beta)$ to denote $\Je(\alpha | m^{\beta})$ for any $\alpha,\beta \in \Htwo$ (see section \ref{setup} for the definition of $\Je(\alpha | m^\beta)$ and $m^\beta$).

For any control $\alpha,\beta^{(1)},\beta^{(2)} \in \Htwo$, let $X^{\alpha},X^{\beta^{(1)}},X^{\beta^{(2)}}$ denote the corresponding state processes, then we have 
$$ \mb{E}[ (X^{\beta^{(1)}}_T -X^{\beta^{(2)}}_T)^2] \leq C_T \int_0^T | \beta^{(1)}_t-\beta^{(2)}_t|^2 dt $$  
Thus, combining with Lipschitz assumption on $g$, it follows that
\begin{equation}\label{approx_est1}
\begin{aligned}
| \Je(\alpha | \beta^{(1)}) - \Je(\alpha | \beta^{(2)}) | &\leq \mb{E}\l[g(X^{\alpha}_T,\Law(X^{\beta^{(1)}}_T|\tF_T) - g(X^{\alpha}_T,\Law(X^{\beta^{(2)}}_T|\tF_T))\r]  \\
&\leq K (\mb{E}[ (X^{\beta^{(1)}}_T -X^{\beta^{(2)}}_T)^2])^{\frac{1}{2}}  \\
&\leq C_{K,T} \l(\int_0^T | \beta^{(1)}_t-\beta^{(2)}_t|^2 dt \r)^{\frac{1}{2}}
\end{aligned}
\ee
Also, since $\hae$ is the $\eps$-MFG solution, it is an optimal control of the individual control given $m^{\hae}=(m^{\hae}_t)_{0 \leq t \leq T}$. Thus, we have the following estimate (see Theorem 2.2 in \cite{carmona2013probabilistic})
\be\label{approx_est2}
\Je(\hae | \hae) + C \int_0^T | \hae_t - \alpha_t|^2 dt \leq \Je(\alpha | \hae)
\ee
for any $\alpha \in \Htwo$. Lastly, from the definition of $\eps$-MFG strategy, we have
\be\label{approx_est3}
\Je(\hae | \hae) \leq \Je (\alpha | \hae)
\ee
for any $\alpha \in \Htwo$. Combining \rref{approx_est1},\rref{approx_est2}, and \rref{approx_est3} yields
\begin{align*}
\Je(\beta^\eps | \beta^\eps )-\Je(\alpha | \beta^\eps) &\leq \Je(\beta^\eps | \beta^\eps )-\Je(\hae | \hae)+ \Je (\alpha | \hae)-\Je(\alpha | \beta^\eps)  \\
&= \Je(\beta^\eps | \beta^\eps )-\Je(\beta^\eps | \hae) + \Je(\beta^\eps|\hae)-\Je(\hae | \hae) \\
&\qquad+ \Je (\alpha | \hae)-\Je(\alpha | \beta^\eps) \\
&\leq C\l[ \l( \int_0^T | \hae_t - \beta^\eps_t|^2 dt \r)^{\frac{1}{2}} + \int_0^T | \hae_t - \beta^\eps_t|^2 dt \r]
\end{align*}
Using estimate \rref{converge_xy} in Theorem \ref{diff}, it follows that
$$ \l( \int_0^T | \hae_t - \beta^\eps_t|^2 dt \r)^{\frac{1}{2}} =|\eps| \l\| \frac{\hae-\ha^0}{\eps} - V_t  \r\|_{\Htwo} \leq C_{K,T}\eps^2 $$
Thus, there exist a constant $\t{C}_{K,T}$ such that 
$$ \Je(\beta^\eps | \beta^\eps )-\Je(\alpha | \beta^\eps)  \leq \t{C}_{K,T}\eps^2 $$
for any $\alpha \in \Htwo$, $|\eps| \leq 1$ as desired.
\end{proof}


\section{Proof of Proposition \ref{meanzero_smp}}\label{proof_ueuo}
\begin{proof}
Fix $(s,x,m) \in [0,T]\times \R \times \Ptwo$, let $(X^\eps_t,Y^\eps_t,Z^\eps_t,\tZ^\eps_t)_{s\leq t \leq T}$ denote the solution to FBSDE \rref{mkfbsde_sub} over $[s,T]$ with initial $X_s = x$ and $(X^0_t,Y^0_t,Z^0_t,\tZ^0_t)_{s\leq t \leq T}$ denote the solution to FBSDE \rref{mkfbsde_sub} with $\eps=0$ and $\xi = x$. Recall that by the definition of $\Ue$ (see section \ref{decoupling_markov}), we have
$$ Y^{\eps}_s = \Ue(s,x,m), \quad Y^0_s = \Uo(s,x,m) $$ 
By the same argument as in Theorem \ref{diff}, we get
 \begin{equation}\label{converge_yx}
 \mb{E} \sup_{s\leq t \leq T} \l[\l(\frac{X^\eps_t-X^0_t}{\eps} - \bU_t\r)^2 + \l(\frac{Y^\eps_t-Y^0_t}{\eps} - \bV_t\r)^2 \r] \leq C_{K,T}\eps^2
 \end{equation}
where $C_{K,T}$ is a constant depending only on $K,T$ and not on $\eps,s,x,m$, and $(\bU_t,\bV_t, \bQ_t, \tbQ_t)_{s \leq t\leq T}$ satisfies
 \begin{equation}
 \label{var_x}
 \begin{gathered}
 	 d\bU_t = - \bV_tdt + d\tW_t \\
	 d\bV_t =  \bQ_tdW_t + \tbQ_t d\tW_t \\
 \bU_s = 0, \bV_T= \dd{xx}g(X^0_T, m^0_T)\bU_T + \hat{\mb{E}}[ \dd{xm}g(X^{0}_{T},m^0_{T})(\hX^0_T)\hU_{T}] , 
\end{gathered}
\end{equation}
where $(\hU_t,\hV_t)_{s \leq t\leq T}$ denote a copy in $(\h{\Omega},\h{\F},\h{\mb{P}})$ of the solution $(U_t,V_t)_{s \leq t \leq T}$ of FBSDE \rref{var} over $[s,T]$, and $\hat{\mb{E}}[\cdot]$ denote the expectation with respect to $\h{\mb{P}}$.

%

Now let $\tE[\cdot]$ denote the expectation with respect to the common Brownian motion $(\tW_t)_{0 \leq t \leq T}$, i.e. with respect to $\tP$. Then since $X^0_t$ is independent of the common Brownian motion and also from Theorem \ref{meanzero} which says that $\hU_{T}$ has mean zero, it follows that $(\tE[\bU_t], \tE[\bV_t])_{s \leq t \leq T}$ satisfies the FBSDE
 \begin{equation}
 \begin{gathered}
 	 d\tE[\bU_t] = - \tE[\bV_t]dt, \\
	 d\tE[\bV_t ]=  C_tdW_t \\
 \tE[\bU_s] = 0, \quad \tE[\bV_T]= \dd{xx}g(X^0_T, m^0_T)\tE[\bU_T]
\end{gathered}
\end{equation}
Note that zero is a solution to this FBSDE and by uniqueness, it must be the only solution (see Theorem 2.2 in \cite{peng1999} or Theorem \ref{wellposed} in Appendix \ref{fbsde_monotone_functionals}). Therefore, 
\begin{equation}\label{euiszero}
 \tE[\bU_t] = \tE[\bV_t] = 0, \qquad \text{for }s \leq t \leq T 
 \end{equation}
 Combining with \rref{converge_yx} and the fact that $\Ue(s,x,m), \U^0(s,x,m)$ are deterministic, we get
 $$ \lim_{\eps\to 0} \sup_{(s,x,m) \in [0,T]\times \R \times \Ptwo } \l|\frac{\Ue(s,x,m)-\U^0(s,x,m)}{\eps} \r|^2 = 0 $$
as desired.  
 \end{proof}

\section{Derivative with respect to a probability measure}\label{derivative}
From the set up of MFG problem, we see that the distribution of player evolves stochastically and, as a result,  some notion of optimization, hence differentiation, over a probability measure is necessary. In this section, we discuss a notion of derivative for a function with a probability measure as its argument. 

A notion of derivative of a function on the space of probability measure was first defined using a geometric approach. See \cite{ambrosio2005,villani2009} for extensive treatments on the subject in this direction. In this work, however, it is more convenient to use an alternative approach which is more probabilistic in nature. To the best of our knowledge, this method was first introduced by P.L.Lions in his lecture at \college, which can be found in Ch.6 of \cite{cardaliaguet2010}. Since then, many works particularly those involve MFG with common noise have employed this notion of derivative. While we will only discuss the results that are relevant to our work here, we refer the interested readers to \cite{carmona2015} or more recently \cite{delarue2014classical} for more detail on this framework.

The idea is based on \textit{lifting up} a function on a space of probability measure to a function on a space of random variable. When the space of probability measure we are working on is $\Ptwo$, this method is extremely useful because it allows us to work on a Hilbert space of square integrable random variable instead of a metric space $\Ptwo$. Consequently, we are able to use a notion of Fr\'echet derivative in Hilbert space to help define a derivative on a space of probability measure.

Let $F$ be a continuous function from $\Ptwo$ to $\R$. Let $(\hat{\Omega},\hat{\F},\hat{\mb{P}})$ be an arbitrary probability measure space such that $\hat{\Omega}$ is a Polish space, $\hat{\mb{P}}$ is an atomless measure. We call a function $\t{F}:\Ltwo(\hat{\Omega};\R) \to \R$ an extension of $F$ if
$$ \t{F}(X) = F(\Law(X)) $$
where $\Law(X)$ denote the law of $X$. Note that $\t{F}$ is a map from Hilbert space $\Ltwo(\hat{\Omega})$ to $\R$, so we can use the notion of Fr\'echet derivative of $\t{F}$ to define a derivative of $F$. 

\begin{definition} Let $m_0 \in \Ptwo$, $F$ is differentiable at $m_{0}$ if there exist an extension $\t{F}$ and $X_0 \in \Ltwo(\hat{\Omega})$ such that $\Law(X_0)=m_0$ and $\t{F}$ is Fr\'echet differentiable at $X_{0}$.

\end{definition}

Suppose $\t{F}$ is Fr\'echet differentiable at $X_{0}$, then by Reiz Representation Theorem, there exist $DF(X_{0}) \in  \Ltwo(\hat{\Omega})$ such that
$$ \lim_{Y\to 0} \frac{| \t{F}(X_{0}+Y)-\t{F}(X_{0})-\hat{\mb{E}}[D\t{F}(X_{0})Y] |}{\|Y\|_2} = 0 $$
where $\| \cdot \|_2$ denote the $\Ltwo(\h{\Omega}) $ norm.

It can be shown (see Theorem 6.2 of \cite{cardaliaguet2010}) that the law of $D\t{F}(X_{0})$ is independent of the choice of lifting $X_{0}$.  In addition, it can also be shown (see Theorem 6.5 of \cite{cardaliaguet2010}) that there exist $h \in \L^2_{m_0}(\R,\R)$ uniquely defined $m_0$-almost everywhere such that for any lifting choice $X_0$, $D\t{F}(X_0) = h(X_0)$. Thus, it is natural to define this function $h \in \Ltwo_{m_0}(\R,\R)$ to be the derivative of $F$ with respect to $m$ at $m=m_0$. 


\begin{definition} The derivative of $F$ with respect to $m$ at $m=m_0$, denoted by $\d{m}F(m_0)$, is a measurable function from $\R \to \R$ such that
$$\lim_{Y \to 0} \frac{ F(\Law(X_0+Y)) - F(\Law(X_0)) - \hat{\mb{E}}[\d{m}F(m_0)(X_0)Y] }{\| Y \|_2} \to 0 $$
for any $X_0, Y \in \Ltwo(\hat{\Omega})$ with $\Law(X_0) = m_0$
\end{definition}

\subsection{Stochastic flow of probability measure and conditional law}\label{derivative_sub}

In the context of MFG with common noise, we will be dealing with a stochastic flow of probability measure $m=(m_t)_{0\leq t \leq T} \in \CP$, which is  the law conditional on $\tF_t$ of a state process
$$ X_t = \xi_0 + \int_0^t \alpha_t dt + \sigma W_t + \eps \tW_t, \quad \forall t \in [0,T]$$
where $\xi_0 \in \Ltwo_{\F_0}, \alpha \in \Htwo$. To take a derivative using the notion described above at $m_t = \Law(X_t | \tF_t)$, we need to find a random variable to represent such law. The obvious choice is simply the state process $X_t$ itself. As we are dealing with a conditional law, so to do the lifting in an explicit manner, we first need to separate the path space for the individual noise and the common noise. We assume that $(\Omega,\F,\mb{P})$ is in the form $ (\Omega^0 \times \t{\Omega},\F^0 \otimes \tF,\mb{P}^0 \otimes \tP)$ where the individual noise $(W_t)_{0 \leq t \leq T}$ and common noise $(\tW_t)_{0 \leq t \leq T}$ are supported in the space $ (\Omega^0,\F^0 ,\mb{P}^0 )$ and $(\t{\Omega},\tF,\tP)$ respectively. We will also assume that $(\t{\Omega},\tF,\tP)$ is the canonical sample space of the Brownian motion $(\tW_t)_{0 \leq t \leq T}$. To avoid confusion between the lifting space and the original space, we will let $(\h{\Omega}^0,\h{\F}^0,\h{\mb{P}}^0)$ denote a copy of $(\Omega^0,\F^0,\mb{P}^0)$  and $\hY_t \in \Ltwo(\h{\Omega}^0 \times \t{\Omega}; \R)$ denote the copy of $Y_t \in \Ltwo(\Omega; \R)= \Ltwo(\Omega^0 \times \t{\Omega}; \R)$ sharing the same common noise for any random variable $Y_t$. We will use this ``hat" notation throughout the paper when using the derivative with respect to a probability measure in the context of MFG with common noise.




\bibliographystyle{abbrv}

\end{document}